\numberwithin{equation}{section}
\newcommand{\pr}{\rightarrow}
\newcommand{\ba}{\begin{array}}
\newcommand{\ea}{\end{array}}
\newcommand{\vart}{\vartheta}
\newcommand{\eps}{\varepsilon}
\newcommand{\il}{\int\limits}
\newenvironment{inspring}[1]%
{\begin{list}{}{\setlength{\rightmargin}{0cm}
                \setlength{\listparindent}{0cm}
                \settowidth{\labelwidth}{\mbox{#1}}
                \setlength{\leftmargin}{1.1\labelwidth}
                \setlength{\labelsep}{.1\labelwidth}}}%
{\end{list}}
\newcommand{\bi}[1]{\begin{inspring}{#1}}
\newcommand{\ei}{\end{inspring}}
\newcommand{\beq}{\begin{equation}}
\newcommand{\eq}{\end{equation}}
\font\tenmsa=msam10 \font\sevenmsa=msam7 \font\fivemsa=msam5
\font\tenmsb=msbm10 \font\sevenmsb=msbm7 \font\fivemsb=msbm5
\def\Bbb{\ifmmode\let\next\Bbb@\else
 \def\next{\errmessage{Use \string\Bbb\space only in math mode}}\fi\next}
\def\Bbb@#1{{\Bbb@@{#1}}}
\def\Bbb@@#1{\fam\msbfam#1}
\newcommand{\dR}{{\Bbb R}}
\newcommand{\dC}{{\Bbb C}}
\newcommand{\dZ}{{\Bbb Z}}
\newtheorem{thm}{Theorem}
\newtheorem{lem}[thm]{Lemma}
\newtheorem{prop}[thm]{Proposition}
\numberwithin{thm}{section}
\newcommand{\e}{{\rm e}}
\newcommand{\dom}{Z_{0}}
\newcommand{\rdom}{c_0}
\begin{document}
\title{Dominant poles and tail asymptotics in the \\critical Gaussian many-sources regime
\thanks{This work was financially supported by The Netherlands Organization for Scientific Research (NWO) and by an ERC Starting Grant.}
}
\author{
A.J.E.M. Janssen
\and
J.S.H. van Leeuwaarden
}

\maketitle
\footnotetext[1]{Eindhoven University of Technology, Department of Mathematics and Computer Science, P.O. Box 513, 5600 MB Eindhoven, The Netherlands.
\{a.j.e.m.janssen,j.s.h.v.leeuwaarden\}@tue.nl.

}

\begin{abstract}
The dominant pole approximation (DPA) is a classical analytic method to obtain from a generating function asymptotic estimates for its underlying coefficients.
We apply DPA to a discrete queue in a critical many-sources regime, in order to obtain tail asymptotics for the stationary queue length. As it turns out, this regime leads to a clustering of the poles of the generating function, which renders the classical DPA useless, since the dominant pole is not sufficiently dominant. To resolve this, we design a new DPA method, which might also find application in other areas of mathematics, like combinatorics, particularly when Gaussian scalings related to the central limit theorem are involved.
\end{abstract}

\section{Introduction}
Probability generating functions (PGFs) encode the distributions of discrete random variables. When PGFs are considered analytic objects, their singularities or poles contain crucial information about the underlying distributions. Asymptotic expressions for the tail distributions, related to large-deviations events, can typically be obtained in terms of the so-called dominant singularities, or dominant poles. The dominant pole approximation (DPA) for the tail distribution is then derived from the partial fraction expansion of the PGF and maintaining of this expansion the dominant fraction related to the dominant pole. Dominant pole approximations have been applied in many branches of mathematics, including analytic combinatorics \cite{flajolet} and queueing theory \cite{VanMieghem1996}. We apply DPA to a discrete queue that has an explicit expression for the PGF of the stationary queue length. Additionally, this queue is considered in a many-sources regime, a heavy-traffic regime in which both the demand on and the capacity of the systems grow large, while their ratio approaches one. This many-sources regime combines high system utilization and short delays, due to economies of scale. The regime is similar in flavor as the QED (quality and efficiency driven) regime for many-server systems \cite{halfinwhitt}, although an important difference is that our discrete queue fed by many sources falls in the class of single-server systems and therefore leads to a manageable closed form expression for the PGF of the stationary queue length $Q$. Denote this PGF by $Q(z)=\mathbb{E}(z^Q)$.

PGFs can be represented  as power series around $z=0$ with nonnegative coefficients (related to the probabilities). We assume that the radius of convergence of $Q(z)$ is larger than one (in which case all moments of $Q$ exist). This radius of convergence is in fact determined by the dominant singularity $\dom$, the singularity in $|z|>1$ closest to the origin. For PGFs, due to Pringsheim's theorem \cite{flajolet}, $\dom$ is always a positive real number larger than one. Then DPA leads to the approximation
\beq\label{tttijms}
\mathbb{P}(Q> N)\approx \frac{\rdom}{1-\dom}\Big(\frac{1}{\dom}\Big)^{N+1} \quad {\rm for} \ {\rm large} \ N
\eq
with $\rdom=\lim_{z\to \dom}(z-\dom)Q(z)$. In many cases the approximation \eqref{tttijms} can be turned into a more rigorous asymptotic expansion (for $N$ large) for the tail probabilities $\mathbb{P}(Q> N)$. We shall now explain in more detail the many-sources regime, the discrete queue, and when combining both, the mathematical challenges that arise when applying DPA.

 \vspace{.5cm}
\noindent {\bf Many sources and a discrete queue.} Consider a stochastic system in which demand per period is given by some random variable $A$, with mean $\mu_A$ and variance $\sigma^2_A$. For systems facing large demand one can set the capacity according to the rule $s=\mu_A+\beta \sigma_A$, which consists of a minimally required part $\mu_A$ and a variability hedge $\beta \sigma_A$. Such a rule can lead to economies of scale, as we will now describe in terms of a setting in which the demand per period is generated by many sources. Consider a system serving $n$ independent sources and let $X$ denote the generic random variable that describes the demand per source per period, with mean $\mu$ and variance $\sigma^2$. Denote the service capacity by $s_n$, so that the system utilization is given by $\rho_n=n\mu/s_n$, where the index $n$ expresses the dependence on the scale at which the system operates. The traditional capacity sizing rule would then be
\beq\label{aa}
s_n=n\mu+\beta \sigma \sqrt{n}
\eq
with $\beta$ some positive constant. The standard heavy-traffic paradigm \cite{britt1,nd1,nd2}, which builds on the Central Limit Theorem, then prescribes to consider a sequence of systems indexed by $n$ with associated loads $\rho_n$ such that (also using that $s=s_n\sim n\mu$)
\beq\label{bb}
\rho_n=\frac{n\mu}{s_n}\sim 1-\frac{\beta\sigma}{\mu\sqrt{n}}=1-\frac{\gamma}{\sqrt{s_n}}, \quad {\rm as} \ n\to\infty,
\eq
where $\gamma=\beta\sigma/\sqrt{\mu}$.
We shall apply the many-sources regime given by \eqref{aa} and \eqref{bb} to a discrete queue, in which we divide time into periods of equal length, and model the net input in consecutive periods as i.d.d.~samples from the distribution of $A$, with mean $n\mu$ and variance $n\sigma^2$. The capacity per period $s_n$ is fixed and integer valued.
 The scaling rule in \eqref{bb} thus specifies how the mean and variance of the demand per period, and simultaneously $s_n$, will all grow to infinity as functions of $n$. Many-sources scaling became popular through the Anick-Mitra-Sondhi model \cite{Anick1982}, as one of the canonical models for modern telecommunications networks, in which a switch may have hundreds of different input flows. But apart from communication networks, the concept of many sources can apply to any service system in which demand can be regarded as coming from many different inputs (see e.g.~\cite{Bruneel1993,johanthesis,Dai2014,Newell1960,vanLeeuwaarden2006} for specific applications.

\vspace{.5cm}
\noindent{\bf How to adapt classical DPA?}
As it turns out, the many-sources regime changes drastically the nature of the DPA. While the queue is pushed into the many-sources regime for letting $n\to\infty$, the dominant pole becomes barely dominant, in the sense that all the other poles (the dominated ones) of the PGF are approaching the dominant pole. For the partial fraction expansion of the PGF this means that it becomes hard, or impossible even, to simply discard the contributions of the fractions corresponding to what we call {\it dominated} poles: all poles other than the dominant pole. Moreover, the dominant pole itself approaches $1$ according to
\beq\label{dombeh}
\dom\sim 1+\frac{2\beta}{\sqrt{n\sigma^2}}, \quad {\rm as} \ n\to\infty.
\eq
This implies that in \eqref{tttijms} the factor $\rdom/(1-\dom)$ potentially explodes, while without imposing further conditions on $N$, the factor $\dom^{-N-1}$  goes to the degenerate value 1. The many-sources regime thus has a fascinating effect on the location of the poles that renders a standard DPA useless for multiple reasons. We shall therefore adapt the DPA in order to make it suitable to deal with the complications that arise in the many-sources regime, with the goal to again obtain an asymptotic expansion for the tail distribution. First observe that the term $\dom^{-N-1}$ in \eqref{tttijms} becomes non-degenerate when we impose that $N\sim K\sqrt{n\sigma^2}$, with $K$ some positive constant, in which case
\beq
\Big(\frac{1}{\dom}\Big)^{N+1}\sim \Big(1+\frac{2\beta}{\sqrt{n\sigma^2}}\Big)^{-K \sqrt{n\sigma^2}}\to \e^{-2\beta K}\in(0,1)
 \quad {\rm as} \  n\to\infty.
\eq
The condition $N\sim K\sqrt{n\sigma^2}$ is natural, because the fluctuations of our stochastic system are of the order $\sqrt{n\sigma^2}$. Of course, there are many ways in which $N$ and $n$ can be coupled, but due to \eqref{dombeh}, only couplings for which $N$ is proportional to $\sqrt{n}$ lead to a nondegenerate limit for $\dom^{-N-1}$. Now let us turn to the other two remaining issues: The fact that $\rdom/(1-\dom)$ potentially explodes and that the dominated poles converge to the dominant pole.

To resolve these two issues we present in this paper an approach that relies on approximations of the type \eqref{dombeh} for all the poles (which are defined implicitly as the solutions to some equation). The approximations are accurate in the many-sources regime, and can then be substituted into the partial fraction expansion that describes the tail distribution. We replace the partial fraction expansion by a contour integral representation, and subsequently apply a dedicated saddle point method recently introduced in \cite{britt1}, with again a prominent role for the dominant pole (this time in relation to the saddle point).
The key challenge is to bound the contributions of the contour integral when shifted beyond the dominant pole, a contribution which is substantial due to the relative large impact of the dominated poles. This saddle point method then provides a fully rigorous derivation of the asymptotic expression for $\mathbb{P}(Q> N)$  and is of the form
\beq \label{onesix}
\mathbb{P}(Q> K \sqrt{n\sigma^2})\sim h(\beta)\cdot \e^{-2\beta K}, \quad {\rm as} \ n\to\infty.
\eq
The function $h(\beta)$ in this asymptotic expression involves infinite series and Riemann zeta functions that are reminiscent of the reflected Gaussian random walk \cite{changperes,jllerch,cumulants}. Indeed, it follows from \cite[Theorem 3]{nd2} that our rescaled discrete queue converges under \eqref{bb} to a reflected Gaussian random walk. Hence, the tail distribution of our system in the  regime \eqref{bb} should for large $n$ be well approximated by the tail distribution of the reflected Gaussian random walk. We return to this connection in Subsection \ref{subsec4.3}.

Our approach thus relies on detailed knowledge about the distribution of all the poles of the PGF of $Q$, and in particular how this distribution scales with the asymptotic regime \eqref{aa}--\eqref{bb}. As it turns out, in contrast with classical DPA, this many-sources regime makes that all poles contribute to the asymptotic characterization of the tail behavior. Our saddle point method leads to an asymptotic expansion for the tail probabilities, of which the limiting form corresponds to the heavy-traffic limit, and pre-limit forms present refined approximations for pre-limit systems ($n<\infty$) in heavy traffic. Such refinements to heavy-traffic limits are commonly referred to as {\em corrected diffusion approximations} \cite{siegmund,blanchetglynn,asmussen}. Compared with the studies that directly analyzed the Gaussian random walk \cite{changperes,jllerch,cumulants}, which is the scaling limit of our queue in the many-sources regime, we start from the pre-limit process description, and establish an asymptotic result which is valuable for a queue with a finite yet large number of sources. Starting this asymptotic analysis from the actual pre-limit process description is mathematically more challenging than directly analyzing the process limit, but in return gives valuable insights into the manner and speed at which the system starts displaying its limiting behavior.
%

%

\vspace{.5cm}
\noindent{\bf Outline of the paper.}
In Section~\ref{sec2} we describe the discrete queue in more detail and present some preliminary results for its stationary queue length distribution. In Section~\ref{sec3} we give an overview of the results and the contour integration representation for the tail distribution. In Section~\ref{sec4}, we give a rigorous proof of the main result of the leading-order term  using the dedicated saddle point method (Subsection~\ref{subsec4.1}), and of bounding the contour integral with integration paths shifted beyond the dominant pole (Subsection~\ref{subsec4.2}). In Section~\ref{subsec4.3} we elaborate on the connection between the discrete queue and the Gaussian random walk, and we present an asymptotic series for $\mathbb{P}(Q >N)$ comprising not only the dominant poles but also the dominated poles. 

%

\section{Model description and preliminaries}\label{sec2}
We  consider a discrete stochastic model in which time is divided into periods of equal length. At the beginning of each period $k=1,2,3,...$ new demand $A_k$ arrives to the system. The demands per period $A_1,A_2,...$ are assumed independent and equal in distribution to some non-negative integer-valued random variable $A$.
The system has a service capacity $s\in\mathbb{N}$ per period, so that the recursion
\beq
\label{lind}
Q_{k+1} = \max\{Q_k + A_k - s,0\},\qquad k=1,2,...,
\eq
assuming $Q_1=0$, gives rise to a Markov chain $(Q_k)_{k\geq 1}$ that describes the congestion in the system over time. The PGF
\beq \label{e2}
A(z)=\sum_{j=0}^{\infty} \mathbb{P}(A=j) z^j
\eq
is assumed analytic in a disk $|z|<r$ with $r>1$, which implies that all moments of $A$ exist.

We assume that
 $A_k$ is in distribution equal to the sum of work generated by $n$ sources, $X_{1,k}+...+X_{n,k}$, where the $X_{i,k}$ are for all $i$ and $k$ i.i.d.~copies of a random variable $X$, of which the PGF $X(z)=\sum_{j=0}^{\infty}\mathbb{P}(X=j)z^j$ has radius of convergence $r>1$, and
\beq \label{e3}
0<\mu_A=n\mu=n X'(1)<s.
\eq




Under the assumption (\ref{e3}) the stationary distribution $\lim_{k\to\infty}\mathbb{P}(Q_k=j)=\mathbb{P}(Q=j)$, $j=0,1,\ldots$ exists, with the random variable $Q$ defined as having this stationary distribution.
We let
\beq \label{e4}
Q(z)=\sum_{j=0}^{\infty}\mathbb{P}(Q=j)z^j
\eq
be the PGF of the stationary distribution.

It is a well-known consequence of Rouch\'e's theorem that under \eqref{e3} $z^s-A(z)$ has precisely $s$ zeros in $|z|\leq1$, one of them being $z_0=1$. We proceed in this paper under the same assumptions as in \cite{britt1}. We assume that $|X(z)|<X(r_1)$, $|z|=r_1$, $z\neq r_1$, for any $r_1\in(0,r)$. Finally, we assume that the degree of $X(z)$ is larger than $s/n$. Under these conditions, $z_0$ is the only zero of $z^s-A(z)$ on $|z|=1$, and all others in $|z|\leq1$, denoted as $z_1,z_2,...,z_{s-1}$, lie in $|z|<1$. Furthermore, there are at most countably many zeros $Z_k$ of $z^s-A(z)$ in $1<|z|<r$, and there is precisely one, denoted by $Z_0$, with minimum modulus.

There is the product form representation \cite{Bruneel1993,johanthesis}
\beq \label{2.1}
Q(z)=\frac{(s-\mu_A)(z-1)}{z^s-A(z)} \prod_{j=1}^{s-1} \frac{z-z_j}{1-z_j},
\eq
where the right-hand side of (\ref{2.1}) is analytic in $|z|<Z_0$ and has a first-order pole at $z=Z_0$. We have for the tail probability (using that $Q(1)=1$) for $N=0,1,...$
\beq \label{2.2}
P(Q>N)=\sum_{i=N+1}^{\infty} P(Q=i)=C_{z^N} \Bigl[\frac{1-Q(z)}{1-z}\Bigr],
\eq
where $C_{z^N}[f(z)]$ denotes the coefficient of $z^N$ of the function $f(z)$. By contour integration, Cauchy's theorem and $Q(1)=1$, we then get for $0<\eps<Z_0-1$
\begin{eqnarray} \label{2.3}
P(Q>N) & = & \frac{1}{2\pi i} \il_{|z|=1+\eps} \frac{1}{z^{N+1}}~\frac{1-Q(z)}{1-z} dz \nonumber \\[3.5mm]
& = & \frac{1}{2\pi i} \il_{|z|=R} \frac{1}{z^{N+1}}~\frac{1-Q(z)}{1-z} dz+\frac{c_0}{Z_0^{N+1}(1-Z_0)},
\end{eqnarray}
where $c_0={\rm Res}_{z=Z_0}[Q(z)]$ and $R$ is any number between $Z_0$ and $\min_{k\neq0}|Z_k|$. When $n$ and $s$ are fixed, we have that the integral on the second line of (\ref{2.3}) is $O(R^{-N})$, and so there is the DPA
\beq \label{2.4}
P(Q>N)=\frac{c_0}{Z_0^{N+1}(1-Z_0)} (1+\mbox{exponentially small}),~~~~N\pr\infty.
\eq

In this paper we crucially rely on
Pollaczek's integral representation for the PGF of Q
\beq \label{2.5}
Q(z)=\exp\Big(\frac{1}{2\pi i} \il_{|v|=1+\eps} \ln \Bigl(\frac{z-v}{1-v}\Bigr) \frac{(v^s-A(v))'}{v^s-A(v)} dv\Big)
\eq
that holds when $|z|<1+\eps<Z_0$ (principal value of ln on $|v|=1+\eps$).

%

\section{Overview and results} \label{sec3}
In order to force the discrete queue to operate in the critical many-sources regimes, we shall assume throughout the paper the following relation between the number of sources $n$ and the capacity $s$:
\beq \label{1.2}
\frac{n\mu}{s}=1-\frac{\gamma}{\sqrt{s}}
\eq
with $\gamma>0$ bounded away from 0 and $\infty$ as $s\pr\infty$.
In this scaling regime, the zeros $z_j$ and $Z_k$ of $z^s-A(z)=0$ start clustering near $z=1$, as described in the next lemma (proved in the appendix). Let $z_j^*$ and $Z_k^*$ denote the complex conjugates of $z_j$ and $Z_k$, respectively.

\begin{lem}\label{lemdis} For finite $j,k=1,2,...$ and $s\pr\infty$,
\beq \label{3.1}
z_0=1,~~~~~~Z_0=1+\frac{2a_0b_0}{\sqrt{s}}+O(s^{-1}),
\eq
\beq \label{3.2}
z_j=z_{s-j}^{\ast}=1+\frac{a_0}{\sqrt{s}} (b_0-\sqrt{b_0^2-2\pi ij})+O(s^{-1}),
\eq
\beq \label{3.3}
Z_k=Z_{-k}^{\ast}=1+\frac{a_0}{\sqrt{s}} (b_0+\sqrt{b_0^2-2\pi ik})+O(s^{-1})
\eq
with
\beq \label{3.4}
a_0=\frac{\sqrt{2\mu}}{\sigma},~~~~~~b_0=\frac{\gamma\sqrt{\mu}}{\sigma\sqrt{2}}
\eq
and principal roots in \eqref{3.2}-\eqref{3.4}.
\end{lem}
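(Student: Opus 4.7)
The plan is to exploit $A(z) = X(z)^n$, so that the zeros of $z^s - A(z)$ are exactly the solutions of $s\log z = n\log X(z) + 2\pi i k$ for $k\in\mathbb{Z}$, and to hunt for them by the rescaling $z = 1 + w/\sqrt{s}$ with $w$ bounded. Since $X$ is analytic and non-vanishing near $z=1$ with $X(1)=1$, $X'(1)=\mu$, and $X''(1)-X'(1)^2 = \sigma^2 - \mu$, Taylor expansion gives
\[
s\log z = w\sqrt{s} - \tfrac{1}{2}w^2 + O(s^{-1/2}),\qquad n\log X(z) = n\mu\,\tfrac{w}{\sqrt{s}} + \tfrac{n(\sigma^2-\mu)}{2s}\,w^2 + O(ns^{-3/2}).
\]
Substituting $n\mu = s - \gamma\sqrt{s}$ from \eqref{1.2}, the $w\sqrt{s}$ contributions cancel and the quadratic terms combine into
\[
s\log z - n\log X(z) = -\tfrac{\sigma^2}{2\mu}\,w^2 + \gamma w + O(s^{-1/2}).
\]

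Equating this with $2\pi i k$ and multiplying by $-2\mu/\sigma^2$ produces the limit quadratic $w^2 - (2\gamma\mu/\sigma^2)w + 4\pi i k\mu/\sigma^2 = 0$, whose two (distinct, since $\gamma>0$) roots are
\[
w_\pm(k) = a_0 b_0 \pm a_0\sqrt{b_0^2 - 2\pi i k},
\]
matching precisely the structure of \eqref{3.2}--\eqref{3.3}. To promote these formal limits to genuine zeros of $z^s - A(z)$, I would fix $k$ and apply Rouché's theorem on small circles $|w - w_\pm(k)| = \delta$ with $\delta = s^{-1/2+\eps}$: the limit quadratic is of size $\gtrsim \delta$ on such a circle while the $O(s^{-1/2})$ remainder is much smaller, so the full function and its quadratic truncation have the same number of zeros inside. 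A one-step refinement (equivalently, the implicit function theorem in the small parameter $s^{-1/2}$) then sharpens the conclusion to $w = w_\pm(k) + O(s^{-1/2})$, which translates to $z = 1 + w_\pm(k)/\sqrt{s} + O(s^{-1})$. Branch identification is routine: the principal square root satisfies $\mathrm{Re}\sqrt{b_0^2 - 2\pi i k} \ge b_0$, so $\mathrm{Re}\,w_-(k) \le 0 \le \mathrm{Re}\,w_+(k)$ and $w_-$ gives the zeros $z_j$ inside the unit disk while $w_+$ gives the zeros $Z_k$ outside; the degenerate case $k=0$ produces $z_0=1$ together with the stated formula for $Z_0$ in \eqref{3.1}.

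The main obstacle is securing the Rouché step with uniform constants: one needs a uniform-in-$s$ bound on the remainder of the Taylor expansion of $n\log X(z)$ on the $O(s^{-1/2})$-neighborhood of $z=1$, which in turn relies on the analyticity of $X$ in $|z|<r$ for some $r>1$. Because the lemma only claims the asymptotics for finitely many indices, the predicted approximate locations $w_\pm(k)$ with $|k|\le J$ are pairwise separated by a positive distance independent of $s$, so the small disks about them are disjoint for $s$ large and the argument principle rules out stray zeros between them. Finally, the complex-conjugate identities $z_j = z_{s-j}^*$ and $Z_k = Z_{-k}^*$ follow from the symmetry $X(\overline z) = \overline{X(z)}$, which makes the zero set invariant under complex conjugation and pairs the two halves of each conjugate pair exactly as stated.
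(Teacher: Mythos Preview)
Your proposal is correct and follows essentially the same route as the paper's proof: write the zero equation as $s\ln z - n\ln X(z) = 2\pi i k$, Taylor expand about $z=1$ using $n\mu = s-\gamma\sqrt{s}$, and reduce to a quadratic whose roots are $a_0b_0\pm a_0\sqrt{b_0^2-2\pi ik}$. The only cosmetic difference is that the paper writes $z=1+v$ and tracks powers of $s$ directly (completing the square in $v$ and taking square roots with an explicit $O((1+|u|)/s)$ remainder), whereas you rescale $z=1+w/\sqrt{s}$ up front and appeal to Rouch\'e plus a Newton step; both implement the same implicit-function-type argument and yield the same $O(s^{-1})$ error in $z$.
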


Due to this clustering phenomenon, the main reasoning that underpin classical DPA cannot be carried over. Starting from the
expression \eqref{2.4} we need to investigate what becomes of the term $c_0/(1-Z_0)$, and moreover, the validity of the exponentially-small phrase in (\ref{2.4}) and the actual $N$-range both become delicate matters that need detailed information about the distribution of the zeros as in Lemma \ref{lemdis}.

Let us first present a result that identifies the relevant $N$-range:
\begin{prop} \label{prop4.6}
\beq \label{4.42}
\frac{1}{Z_0^{N+1}}=\exp\Bigl(\frac{-2L\gamma\mu}{\sigma^2}\Bigr)(1+O(s^{-1/2}))
\eq
when $N+1=L\sqrt{s}$ with $L>0$ bounded away from 0 and $\infty$.
\end{prop}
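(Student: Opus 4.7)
The proof is essentially a direct substitution of Lemma~\ref{lemdis} followed by a careful Taylor expansion of the logarithm, so the plan is short and there is no serious obstacle.

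First I would compute the product $2a_0 b_0$ from the definitions in \eqref{3.4}:
\beq
2a_0 b_0 = 2\cdot\frac{\sqrt{2\mu}}{\sigma}\cdot\frac{\gamma\sqrt{\mu}}{\sigma\sqrt{2}} = \frac{2\gamma\mu}{\sigma^2},
\eq
so that \eqref{3.1} can be rewritten as $Z_0 = 1 + \frac{2\gamma\mu}{\sigma^2\sqrt{s}} + O(s^{-1})$.

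Next I would take logarithms. Since the correction to $1$ is of order $s^{-1/2}\to 0$, the expansion $\ln(1+x) = x + O(x^2)$ gives
\beq
\ln Z_0 = \frac{2\gamma\mu}{\sigma^2\sqrt{s}} + O(s^{-1}).
\eq
Multiplying by $-(N+1) = -L\sqrt{s}$ yields
\beq
-(N+1)\ln Z_0 = -\frac{2L\gamma\mu}{\sigma^2} + O(s^{-1/2}),
\eq
where the absorbed error term uses that $L$ is bounded away from $0$ and $\infty$.

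Finally, exponentiating and using $\e^{O(s^{-1/2})} = 1 + O(s^{-1/2})$ gives
\beq
\frac{1}{Z_0^{N+1}} = \exp\Bigl(-\frac{2L\gamma\mu}{\sigma^2}\Bigr)\bigl(1 + O(s^{-1/2})\bigr),
\eq
which is the claim. The only place one must be a little careful is to verify that the $O(s^{-1})$ remainder in $Z_0$ provided by Lemma~\ref{lemdis} is indeed uniform in the relevant parameters so that multiplication by $L\sqrt{s}$ produces an $O(s^{-1/2})$ term; this is immediate from the statement of the lemma since $L$ is fixed and bounded. No subtler estimate is needed.
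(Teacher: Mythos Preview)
Your proof is correct and follows essentially the same approach as the paper: both compute $2a_0b_0=2\gamma\mu/\sigma^2$ from \eqref{3.4}, substitute into \eqref{3.1}, expand $\ln Z_0$ to first order, multiply by $-(N+1)=-L\sqrt{s}$, and exponentiate. You spell out the intermediate steps a bit more explicitly, but the argument is identical.
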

\begin{proof} We have from (\ref{3.1}) and \eqref{3.4} that $Z_0=1+\frac{2\gamma\mu}{\sigma^2\sqrt{s}}+O(\frac{1}{s})$. Hence
\begin{eqnarray} \label{4.43}
\frac{1}{Z_0^{N+1}}  =  \exp\Bigl({-}L\sqrt{s} \ln \Bigl( 1+\frac{2\gamma\mu}{\sigma^2\sqrt{s}}+O(s^{-1})\Bigr)\Bigr) = \exp\Bigl(\frac{-L\gamma\mu}{\sigma^2}+O(s^{-1/2})\Bigr)
\end{eqnarray}
when $L$ is bounded away from 0 and $\infty$, and this gives the result.
\end{proof}

From (\ref{2.1}) we obtain the representation
\beq \label{3.5}
\frac{c_0}{1-Z_0}={-} \frac{s-\mu_A}{sZ_0^{s-1}-A'(Z_0)} \prod_{j=1}^{s-1} \frac{Z_0-z_j}{1-z_j}.
\eq
The next result will be proved in Section~\ref{sec4}.

\begin{lem} \label{lem4.1}
\beq \label{4.2}
- \frac{s-\mu_A}{sZ_0^{s-1}-A'(Z_0)}=\Big(\frac{1}{Z_0}\Big)^{s-1}\Big(1+O(s^{-1/2})\Big).
\eq
\end{lem}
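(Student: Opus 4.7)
The plan is to exploit the independence relation $A(z)=X(z)^n$ (which follows from $A\stackrel{d}{=}X_1+\cdots+X_n$), together with the defining identity $Z_0^s=A(Z_0)$, in order to eliminate the large factor $Z_0^{s-1}$ from the denominator before performing any Taylor expansion. Writing $g(z):=zX'(z)/X(z)$, I would note that
\[
A'(Z_0)=nX(Z_0)^{n-1}X'(Z_0)=\frac{nZ_0^s X'(Z_0)}{X(Z_0)}=\frac{Z_0^{s-1}\cdot nZ_0X'(Z_0)}{X(Z_0)},
\]
so that
\[
sZ_0^{s-1}-A'(Z_0)=Z_0^{s-1}\bigl(s-ng(Z_0)\bigr).
\]
This already isolates the factor $(1/Z_0)^{s-1}$ appearing on the right-hand side of \eqref{4.2}, and reduces the lemma to showing that $s-ng(Z_0)=-(s-\mu_A)(1+O(s^{-1/2}))$.

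Next I would Taylor expand $g$ around $z=1$. Standard PGF identities give $g(1)=X'(1)=\mu$ and, after a short calculation using $X''(1)=\sigma^2-\mu+\mu^2$, $g'(1)=\mu+X''(1)-\mu^2=\sigma^2$. Combining with Lemma~\ref{lemdis}, which yields $Z_0-1=\frac{2a_0b_0}{\sqrt s}+O(s^{-1})=\frac{2\gamma\mu}{\sigma^2\sqrt s}+O(s^{-1})$, I obtain
\[
ng(Z_0)=n\mu+n\sigma^2(Z_0-1)+O\bigl(n(Z_0-1)^2\bigr).
\]
The critical scaling \eqref{1.2} gives $s-n\mu=\gamma\sqrt s$ and $n=s/\mu+O(\sqrt s)$, so that
\[
n\sigma^2(Z_0-1)=\frac{s}{\mu}\cdot\sigma^2\cdot\frac{2\gamma\mu}{\sigma^2\sqrt s}\bigl(1+O(s^{-1/2})\bigr)=2\gamma\sqrt s\bigl(1+O(s^{-1/2})\bigr),
\]
while the remainder $n(Z_0-1)^2=O(1)$. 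Substituting back,
\[
s-ng(Z_0)=\gamma\sqrt s-2\gamma\sqrt s\bigl(1+O(s^{-1/2})\bigr)+O(1)=-\gamma\sqrt s\bigl(1+O(s^{-1/2})\bigr).
\]
Since $s-\mu_A=\gamma\sqrt s$, dividing yields \eqref{4.2}.

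The main obstacle is purely bookkeeping: one must verify that the several $O(s^{-1/2})$ perturbations (from $n$, from $Z_0-1$, and from the quadratic Taylor remainder) combine so that the leading $\gamma\sqrt s$ terms cancel \emph{exactly} and the next-order error is genuinely $O(s^{-1/2})$ relative to $\gamma\sqrt s$, rather than $O(1)$ additive (which would be fatal, since $\gamma\sqrt s$ itself is only the leading term). Concretely this hinges on the identity $g'(1)=\sigma^2$ together with the value $a_0b_0=\gamma\mu/\sigma^2$ from \eqref{3.4}, which together force the $2\gamma\sqrt s$ cancellation. Once these are in hand, the rest of the argument is an elementary expansion.
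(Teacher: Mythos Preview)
Your proof is correct and follows essentially the same approach as the paper: both extract the factor $Z_0^{s-1}$ from the denominator using $Z_0^s=X^n(Z_0)$, then Taylor expand the remaining ratio around $z=1$ and use the scaling $s-n\mu=\gamma\sqrt{s}$ together with $Z_0-1=2\gamma\mu/(\sigma^2\sqrt{s})+O(s^{-1})$ to obtain the required cancellation. Your packaging via $g(z)=zX'(z)/X(z)$ with $g(1)=\mu$, $g'(1)=\sigma^2$ is a clean way to organise the same computation the paper carries out directly on $X'(Z_0)Z_0/(X'(1)X(Z_0))$; note that your closing remark is slightly garbled, since an $O(1)$ additive error on $-\gamma\sqrt{s}$ is precisely the $O(s^{-1/2})$ relative error you need and is not fatal---the danger would be an $O(\sqrt{s})$ additive residual.
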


We thus get from (\ref{3.5}) and Lemma~\ref{lem4.1}
\beq \label{4.10}
\frac{c_0}{1-Z_0}=\frac{P(Z_0)}{P(1)} \Bigl(1+O(s^{-1/2})\Bigr),
\eq
where
\beq \label{4.11}
P(Z)=\prod_{j=1}^{s-1} (1-z_j/Z)=\exp\Bigl(\sum_{j=1}^{s-1} \ln (1-z_j/Z)\Bigr)
\eq
for $Z\in\dC$, $|Z|\geq1$ (principal logarithm).
To handle the product $P(z)$, in Lemma~\ref{lem4.3} below, we evaluate $\ln  P(Z)$ for $|Z|\geq1$ in terms of the contour integral
\beq \label{4.12}
I(Z)=\frac{1}{2\pi i} \il_{|z|=1+\eps} \frac{\ln (1-z^{-s}A(z))}{Z-z} dz,
\eq
where $\eps>0$ is such that $1<1+\eps<Z_0$.
\begin{lem} \label{lem4.3}
Let $\eps>0$, $1<1+\eps<Z_0$ and $|Z|\geq1$. Then
\beq \label{4.13}
\ln  P(Z)=\left\{\ba{lll}
- \ln (1-Z^{-1})+I(Z), \ 1+\eps < |Z|< r,\\
- \ln (1-Z^{-1})+\ln (1-Z^{-s}A(Z))+I(Z), \ 1<|Z|<1+\eps,\\
~~\:\ln (\gamma\sqrt{s})+I(1), \ Z=1.
\ea\right.
\eq
\end{lem}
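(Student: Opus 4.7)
The plan is to prove the three cases in order. Case one is the main step, and the other two follow from it by a contour deformation and a limit, respectively.

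For case one ($1+\eps<|Z|<r$), I introduce the auxiliary integral
\[
J(Z) := \frac{1}{2\pi i}\oint_{|z|=1+\eps} \ln(1 - z/Z)\,\frac{(z^s-A(z))'}{z^s-A(z)}\, dz.
\]
Since $|Z|>1+\eps$, the factor $\ln(1-z/Z)$ is holomorphic in a neighborhood of the closed disk $|z|\leq 1+\eps$ (its argument stays in the disk of radius less than $1$ about $1$, where the principal log is analytic). The logarithmic derivative $(z^s-A(z))'/(z^s-A(z))$ has simple poles at the $s$ zeros $z_0=1,z_1,\ldots,z_{s-1}$ inside the contour, each with residue $1$, so the residue theorem gives $J(Z)=\ln(1-1/Z)+\ln P(Z)$.

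Next I identify $J(Z)$ with $I(Z)$. Writing
\[
\frac{(z^s-A(z))'}{z^s-A(z)} = \frac{s}{z}+\frac{d}{dz}\ln(1-z^{-s}A(z)),
\]
the factor $\ln(1-z^{-s}A(z))$ is single-valued on $|z|=1+\eps$ because, by the argument principle, the number of zeros of $z^s-A(z)$ and of $z^s$ inside this circle both equal $s$, so the winding number of $1-z^{-s}A(z)$ about $0$ vanishes. The $s/z$ piece contributes $s\cdot\ln(1-0/Z)=0$ by the residue theorem at $z=0$. For the remaining piece, both $\ln(1-z/Z)$ and $\ln(1-z^{-s}A(z))$ are single-valued and holomorphic in a neighborhood of the contour, so integration by parts on the closed curve applies without boundary terms, converting it into exactly $I(Z)$. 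Combining gives case one.

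Case two follows by contour deformation: for $1<|Z|<1+\eps$ I shrink the contour of $I(Z)$ to $|z|=1+\eps'$ with $1<1+\eps'<|Z|$. In the annulus $1<|z|<Z_0$ the function $\ln(1-z^{-s}A(z))$ is holomorphic (no zeros of $z^s-A(z)$, winding number zero), so the only singularity crossed is the simple pole of $1/(Z-z)$ at $z=Z$, contributing residue $-\ln(1-Z^{-s}A(Z))$. Applying case one on the inner contour then yields the formula with the extra term. Case three is the limit $Z\to 1^+$ in case two: both $-\ln(1-Z^{-1})$ and $\ln(1-Z^{-s}A(Z))$ diverge like $-\ln(Z-1)$, but the Taylor expansions $1-Z^{-s}A(Z)=(s-\mu_A)(Z-1)+O((Z-1)^2)$ and $1-Z^{-1}=(Z-1)/Z$ make their sum converge to $\ln(s-\mu_A)=\ln(\gamma\sqrt{s})$ by \eqref{1.2}, while $I(Z)\to I(1)$ by continuity since $Z=1$ stays off the contour. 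The main obstacle is branch management in case one: verifying that $\ln(1-z^{-s}A(z))$ is single-valued on the contour (via the argument principle) is what makes the integration by parts on a closed curve legitimate, and isolating the multi-valued $s\ln z$ piece is the key maneuver.
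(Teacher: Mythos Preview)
Your proof is correct and follows essentially the same route as the paper: integration by parts to connect $I(Z)$ with an integral against the logarithmic derivative, then residues to recover $\sum_{j=0}^{s-1}\ln(1-z_j/Z)$, with the second case handled by picking up the residue at $z=Z$ and the third by letting $Z\downarrow 1$. The only cosmetic differences are that the paper works directly with the logarithmic derivative of $1-A(z)/z^s$ (so the $s/z$ term never appears), and justifies single-valuedness of $\ln(1-z^{-s}A(z))$ on $|z|=1+\eps$ by the simpler observation $|A(z)|<|z|^s$ there rather than by the argument-principle winding count; your decomposition and winding argument are equally valid.
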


The dedicated saddle point method, as considered in \cite{britt1}, applied to $I(Z)$, with saddle point $z_{\rm sp}=1+\eps$ of the function $g(z)={-}\ln  z+\frac{n}{s} \ln (X(Z))$, yields
\beq \label{3.9}
I(1)={-}\ln  [Q(0)]+O(s^{-1/2}),\quad I(Z_0)=\ln  [Q(0)]+O(s^{-1/2}).
\eq
Combining (\ref{3.1}), (\ref{3.4}), (\ref{3.5}), (\ref{4.2}) and (\ref{3.9}), then gives one of our main results:
\begin{prop} \label{prop4.4}
\beq \label{4.16}
\ln \Bigl(\frac{c_0}{1-Z_0}\Bigr)={-}\ln (4b_0^2)+2\ln  [Q(0)]+O(s^{-1/2}).
\eq
\end{prop}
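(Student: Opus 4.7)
The plan is to assemble the statement from the three ingredients already established in the excerpt: the relation \eqref{4.10} between $c_0/(1-Z_0)$ and the ratio $P(Z_0)/P(1)$, the logarithmic representation of $P(Z)$ in Lemma~\ref{lem4.3}, and the saddle point evaluations of $I(1)$ and $I(Z_0)$ in \eqref{3.9}. Taking the logarithm of \eqref{4.10} turns the task into computing $\ln P(Z_0) - \ln P(1)$ up to $O(s^{-1/2})$.

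First I would apply Lemma~\ref{lem4.3} with $Z=Z_0$ (which lies in the range $1+\eps<|Z|<r$, by choice of $\eps$), giving
\beq
\ln P(Z_0) = -\ln(1-Z_0^{-1}) + I(Z_0),
\eq
and with $Z=1$, which falls in the third case:
\beq
\ln P(1) = \ln(\gamma\sqrt{s}) + I(1).
\eq
Substituting the saddle point estimates \eqref{3.9}, namely $I(Z_0) = \ln[Q(0)] + O(s^{-1/2})$ and $I(1) = -\ln[Q(0)] + O(s^{-1/2})$, the contributions from $I$ combine to yield $2\ln[Q(0)]$. What remains of the algebra is
\beq
\ln\Bigl(\frac{c_0}{1-Z_0}\Bigr) = -\ln(1-Z_0^{-1}) - \ln(\gamma\sqrt{s}) + 2\ln[Q(0)] + O(s^{-1/2}).
\eq

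The final step is to identify the constant. Using Lemma~\ref{lemdis}, $Z_0 = 1 + 2a_0b_0/\sqrt{s} + O(s^{-1})$, so
\beq
1-Z_0^{-1} = \frac{2a_0b_0}{\sqrt{s}}\bigl(1+O(s^{-1/2})\bigr),
\eq
and therefore
\beq
-\ln(1-Z_0^{-1}) - \ln(\gamma\sqrt{s}) = -\ln(2a_0b_0\gamma) + O(s^{-1/2}).
\eq
From \eqref{3.4} one checks that $a_0\gamma = \gamma\sqrt{2\mu}/\sigma = 2b_0$, so $2a_0b_0\gamma = 4b_0^2$, and the claimed formula \eqref{4.16} follows.

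I do not expect any real obstacle here: all the analytic work (the saddle point evaluation of $I(Z)$ and the product representation for $P(Z)$) has been absorbed into the earlier lemmas, and what is left is bookkeeping plus the arithmetic identity $a_0\gamma=2b_0$ that makes the $\ln\sqrt{s}$ terms cancel and produces exactly the combination $4b_0^2$. The only place one must be slightly careful is to verify that $Z_0$ indeed sits in the first branch of \eqref{4.13} (i.e.\ that $\eps$ can be chosen with $1+\eps<Z_0$, which is consistent with the choice made just before \eqref{4.12}) and that the error in the expansion of $1-Z_0^{-1}$ remains multiplicative of size $O(s^{-1/2})$ after taking the logarithm, which it does because $2a_0b_0/\sqrt{s}\to 0$ but the $O(s^{-1/2})$ correction inside the logarithm is uniformly small.
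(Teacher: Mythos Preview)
Your argument is correct and follows the same skeleton as the paper: take logarithms in \eqref{4.10}, apply Lemma~\ref{lem4.3} at $Z=Z_0$ and $Z=1$, reduce to $-\ln(1-Z_0^{-1})-\ln(\gamma\sqrt{s})+I(Z_0)-I(1)+O(s^{-1/2})$, and then use the expansion of $Z_0$ together with the identity $a_0\gamma=2b_0$ to turn the first two terms into $-\ln(4b_0^2)$. The one structural difference is that you invoke \eqref{3.9} as a black box, whereas the paper's formal proof in Section~\ref{subsec4.1} actually \emph{derives} \eqref{3.9} at this point: it shows $I(1)=-\ln[P(Q=0)]+O(s^{-1/2})$ via the identity $\tfrac{1}{z-1}=\tfrac{1}{z(z-1)}+\tfrac{1}{z}$ and the known integral representation \eqref{4.23} for $\ln[P(Q=0)]$, and then shows $I(Z_0)=-I(1)+O(s^{-1/2})$ by a symmetry argument around the saddle $z_{\rm sp}\approx\tfrac12(1+Z_0)$ together with a careful estimate of the residual integral (equations \eqref{4.27}--\eqref{4.34}). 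Since \eqref{3.9} is announced in the overview but only proved inside the proof of Proposition~\ref{prop4.4}, your write-up is fine as a summary but would need those saddle-point estimates filled in to stand alone.
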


The next step consists of bounding the integral on the second line of (\ref{2.3}), that can be written as
\beq \label{3.12}
\frac{-1}{2\pi i} \il_{|z|=R} \frac{Q(z)}{z^{N+1}(1-z)} dz,
\eq
by choosing $R$ appropriately. To do this, we consider the product representation (\ref{2.1}) of $Q(z)$, and we want to choose $R$ such that $|z^s-A(z)|\geq C |z|^s$, $|z|=R$, for some $C>0$ independent of $s$. It will be shown in Section~\ref{sec4} that this is achieved by taking $R$ such that the curve $|z^s|=|A(z)|$, on which $Z_0$ and $Z_{\pm1}$ lie, is crossed near a point $z$ (also referred to as $Z_{\pm1/2}$), where $z^s$ and $A(z)$ have opposite sign. A further analysis, using again the dedicated saddle point method to bound the product $\prod_{j-1}^{s-1}$ in (\ref{2.1}), then yields that the integral in (\ref{3.12}) decays as $R^{-N}$. Finally using the asymptotic information in (\ref{3.1})-(\ref{3.3}) for $Z_0$ and $Z_{\pm1}$, with $Z_{\pm1/2}$ lying midway between $Z_0$ and $Z_{\pm1}$, the integral on the second line of (\ref{2.3}) can be shown to have relative order $\exp({-}DN/\sqrt{s})$, for some  $D>0$ independent of $s$, compared to the dominant-pole term in (\ref{2.4}).

To summarize, we have now that
\beq\label{317}
\mathbb{P}(Q> N)= \frac{\rdom}{1-\dom}\Big(\frac{1}{\dom}\Big)^{N+1} \Big(1+O(\e^{-DN/\sqrt{s}})\Big),
\eq
for some $D>0$ independent of $s$, $N=1,2,\ldots$. The DPA $\rdom/(1-\dom)^{-1}\dom^{-N-1}$ of $\mathbb{P}(Q> N)$ thus has a relative error that decays exponentially fast.

In Subsection~\ref{subGRW} the stationary queue length $Q$, considered in the many-sources regime, is shown to be connected to the Gaussian random walk. This connection will imply that the front factor of the DPA in \eqref{317} satisfies
\beq\label{318}
\frac{\rdom}{1-\dom}=H(b_0) \Big(1+O(s^{-1/2})\Big),
\eq
where $\ln H(b_0)$ has a power series in $b_0$ with coefficients that can be expressed in terms of the Riemann zeta function. Combining this with Proposition \ref{prop4.6} and \eqref{317} yields
\beq\label{319}
\mathbb{P}(Q> N)=H(b_0)\exp\Big(-\frac{2L\gamma\mu}{\sigma^2}\Big)\Big(1+O(s^{-1/2})\Big)
\eq
when $N+1=L\sqrt{s}$ with $L$ bounded away from $0$ and $\infty$. The leading term in \eqref{319} agrees with \eqref{onesix} when we identify
\beq\label{320}
L=\sigma K/\sqrt{\mu}, \quad \gamma=\beta\sigma/\sqrt{\mu}, \quad s=n\mu+\beta\sigma\sqrt{n}\approx n\mu, \quad b_0=\beta/\sqrt{2}
\eq
and $H(b_0)=h(\beta)$.

In Subsection~\ref{subsubsec4.3.2} we extend for a fixed $M=1,2,\ldots$ the approach in Section \ref{sec4} by increasing the radius $R$ of the integration contour in \eqref{3.12} to $R_M$ such that the poles $Z_0, Z_{\pm 1},\ldots,Z_{\pm M}$ are inside $|z|=R_M$. this lead to
\begin{eqnarray} \label{321}
P(Q>N)={\rm Re} \Bigl[\frac{c_0}{(1-Z_0) Z_0^{N+1}}  +  2 \sum_{k=1}^M \frac{c_k}{(1-Z_k) Z_k^{N+1}}\Bigr] +O\Big(|Z_{M+1}|^{-N}\Big).
\end{eqnarray}
The front factors $c_k/(1-Z_k)$ in the series in \eqref{321} satisfy
\beq\label{322}
\frac{c_k}{1-Z_k}=H_k(b_0) \Big(1+O(s^{-1/2})\Big),
\eq
with $H_k(b_0)$ some explicitly defined integral. When $N+1=L\sqrt{s}$ with $L$ bounded away from $0$ and $\infty$, we find from
\eqref{322} and Proposition \ref{prop4.6} that
\beq\label{323}
\frac{c_k}{1-Z_k}=H(b_k)\exp\Big(-L a_0(\sqrt{b_0^2-2\pi i k}+b_0)\Big)\Big(1+O(s^{-1/2})\Big),
\eq
compare with \eqref{318}, and it can be shown that this gives rise to an $\exp(-DL/\sqrt{k})$ decay of the right-hand side of \eqref{323}.
The results in \eqref{319} and \eqref{323} together give precise information as to how the DPA arises, with leading behavior from the dominant pole, and lower order refinements coming from the dominated poles.

\section{DPA through contour integration} \label{sec4}

In this section we present the details of getting approximations of the tail probabilities using a contour integration approach as outlined in Section~\ref{sec3}. In Subsection~\ref{subsec4.1}, we concentrate on approximation of the front factor $c_0/(1-Z_0)$ and the dominant pole $Z_0$, and combine these to obtain an approximation of the leading-order term in (\ref{2.4}). This gives Lemma \ref{lem4.1}, Lemma \ref{lem4.3} and Proposition \ref{prop4.4}.

In Subsection~\ref{subsec4.2} we assess and bound the integral on the second line of (\ref{2.3}) and thereby make precise what exponentially small in (\ref{2.4}) means in the present setting.

\subsection{Approximation of the leading-order term} \label{subsec4.1}

\subsubsection{Proof of Lemma \ref{lem4.1}}


From
\beq \label{4.3}
Z_0^s=A(Z_0)=X^n(Z_0) ,~~\mu_A=n \mu=s\Bigl(1-\frac{\gamma}{\sqrt{s}}\Bigr) ,~~A'(Z_0)=n X'(Z_0) X^{n-1}(Z_0),
\eq
we compute
\beq \label{4.4}
\frac{s-\mu_A}{sZ_0^{s-1}-A'(Z_0)}=\frac{\gamma/\sqrt{s}}{1-(1-\gamma/\sqrt{s}) \dfrac{X'(Z_0)Z_0}{X'(1) X(Z_0)}}~\frac{1}{Z_0^{s-1}}.
\eq
With the approximation \eqref{3.1}, written as
\beq \label{4.5}
Z_0=1+\frac{d_0}{\sqrt{s}}+O(s^{-1}),~~~~~~d_0=\frac{2\gamma\mu}{\sigma^2},
\eq
 we get
\begin{eqnarray} \label{4.6}
X'(Z_0) =  X'(1)+X''(1)(Z_0-1)+O(s^{-1})  = \mu+\frac{X''(1) d_0}{\sqrt{s}}+O(s^{-1})
\end{eqnarray}
and
\begin{eqnarray} \label{4.7}
X(Z_0)  =  X(1)+X'(1)(Z_0-1)+O(s^{-1}) = 1+\frac{\mu d_0}{\sqrt{s}}+O(s^{-1}).
\end{eqnarray}
Hence, by (\ref{4.5}--\ref{4.7}) and (\ref{A6}),
\begin{eqnarray} \label{4.8}
& \mbox{} & 1-\Bigl(1-\frac{\gamma}{\sqrt{s}}\Bigr) \frac{X'(Z_0) Z_0}{X'(1) X(Z_0)} \nonumber \\[3mm]
& & =~1-\Bigl(1-\frac{\gamma}{\sqrt{s}}\Bigr) \frac{\Bigl(\mu+\dfrac{X''(1) d_0}{\sqrt{s}}+O(s^{-1})\Bigr) \Bigl(1+\dfrac{d_0}{\sqrt{s}}+O(s^{-1})\Bigr)}{\mu\Bigl(1+\dfrac{\mu d_0}{\sqrt{s}}+O(s^{-1})\Bigr)} \nonumber \\[3mm]
& & =~1-\bigl(1-\frac{\gamma}{\sqrt{s}}\Bigr)\Bigl(1+\frac{d_0}{\sqrt{s}} \Bigl(\frac{X''(1)}{\mu}+1-\mu\Bigr)+O(s^{-1})\Bigr) \nonumber \\[3mm]
& & =~1-\bigl(1-\frac{\gamma}{\sqrt{s}}\Bigr)\Bigl(1+\frac{d_0}{\sqrt{s}}~\frac{\sigma^2}{\mu}+O(s^{-1})\Bigr)={-}  \frac{\gamma}{\sqrt{s}}+O(s^{-1}),
\end{eqnarray}
where we have used $d_0$ of (\ref{4.5}) in the last step. This gives (\ref{4.2}).

\subsubsection{Proof of Lemma \ref{lem4.3}}
We have $|A(z)|<|z^s|$ when $z\neq1$, $1<|z|<Z_0$, and so $\ln (1-A(z)/z^s)$ is analytic in $z\neq1$, $1<|z|<Z_0$. When $|Z|>1+\eps$, we have by partial integration and Cauchy's theorem
\begin{eqnarray} \label{4.14}
I(Z) & = & \frac{1}{2\pi i} \il_{|z|=1+\eps} \ln \Bigl(1-\frac{z}{Z}\Bigr) \frac{(1-A(z)/z^s)'}{1-A(z)/z^s} dz \nonumber \\[3.5mm]
& = & \sum_{j=0}^{s-1} \ln \Bigl(1-\frac{z_j}{Z}\Bigr)=\ln \bigl(1-\frac1Z\Bigr)+\ln  P(Z).
\end{eqnarray}
This gives the upper-case formula in (\ref{4.13}), and the middle case follows in a similar manner by taking the residue at $z=Z$ inside $|z|=1+\eps$ into account. For the lower case in (\ref{4.13}), we use the result of the middle case, in which we take $1<Z<1+\eps$, $Z\downarrow1$. We have $I(Z)\pr I(1)$ as $Z\downarrow1$, and
\begin{eqnarray} \label{4.15}
& \mbox{} & \lim_{Z\downarrow1} \Bigl[{-}\ln \Bigl(1-\frac1Z\Bigr)+\ln \Bigl(1-\frac{X^n(Z)}{Z^s}\Bigr)\Bigr] \nonumber \\[3mm]
& & =~\ln  [(Z^s-X^n(Z))'|_{Z=1}]=\ln (s-n\mu)=\ln (\gamma\sqrt{s}),
\end{eqnarray}
and this completes the proof.
\subsubsection{Proof of Proposition \ref{prop4.4}}
By (\ref{4.10}) and Lemma~\ref{lem4.3} we have
\begin{eqnarray} \label{4.17}
\hspace*{-1cm}\ln \Bigl(\frac{c_0}{1-Z_0}\Bigr) & = & \ln  P(Z_0)-\ln  P(1)+O(s^{-1/2}) \nonumber \\[3mm]
& = & {-}\ln \Bigl(1-\frac{1}{Z_0}\Bigr)-\ln (\gamma\sqrt{s})+I(Z_0)-I(1)+O(s^{-1/2}) .
\end{eqnarray}
From \eqref{3.1} it follows that
\beq \label{4.18}
-\ln \Bigl(1-\frac{1}{Z_0}\Bigr)-\ln (\gamma\sqrt{s})={-}\ln \bigl(\frac{2\gamma^2 \mu}{\sigma^2}\Bigr)+O(s^{-1/2}) ={-}\ln (4b_0^2)+O(s^{-1/2}),
\eq
and so
\beq \label{4.19}
\ln \Bigl(\frac{c_0}{1-Z_0}\Bigr)=I(Z_0)-I(1)-\ln (4b_0^2)+O(s^{-1/2}).
\eq
Next, we consider the integral representation (\ref{4.12}) of $I(Z)$, where we take $\eps$ such that
\beq \label{4.20}
1+\eps=z_{\rm sp}=1+\frac{\gamma\mu}{\sigma^2\sqrt{s}}+O(s^{-1}),
\eq
with $z_{\rm sp}$, see \cite[Section~3]{britt1}, the unique point $z\in(1,Z_0)$ such that
\beq \label{4.21}
\frac{d}{dz} \Bigl[{-}\ln  z+\frac{n}{s} \ln (X(z))\Bigr]=0.
\eq
Observe that
\beq \label{4.22}
z_{\rm sp}=\tfrac12 (1+Z_0)+O(s^{-1}),~~~~~~Z_0-z_{\rm sp}=z_{\rm sp}-1+O(s^{-1}),
\eq
and this suggests that $I(Z_0)\approx{-}I(1)$, a statement made precise below, since the main contribution to $I(Z)$ comes from the $z$'s in (\ref{4.12}) close to $z_{\rm sp}$. \\
We have, see \cite{britt1},
\beq \label{4.23}
\ln  [P(Q=0)]=\frac{1}{2\pi i} \il_{|z|=z_{\rm sp}} \frac{\ln \Bigl(1-z^{-s}A(z)\Bigr)}{z(z-1)} dz.
\eq
Now
\beq \label{4.24}
\frac{1}{z-1}=\frac{1}{z(z-1)}+\frac1z,
\eq
and
\beq \label{4.25}
\il_{|z|=z_{\rm sp}} \Bigl|\ln \Bigl(1-z^{-s}A(z)\Bigr)\Bigr| |dz|=O(s^{-1/2}),
\eq
see  \cite[Subsection~5.3]{britt1}. Hence,
\beq \label{4.26}
I(1)=\frac{-1}{2\pi i} \il_{|z|=z_{\rm sp}} \frac{\ln \Bigl(1-z^{-s}A(z)\Bigr)}{z-1} dz=\ln  [P(Q=0)]+O(s^{-1/2}).
\eq
As to $I(Z_0)$, we observe that, see (\ref{4.22}),
\beq \label{4.27}
\frac{1}{Z_0-z}=\frac{1}{z-1}+2 \frac{z-\frac12 (1+Z_0)}{(Z_0-z)(z-1)}=\frac{1}{z-1}+2  \frac{z-z_{\rm sp}}{(Z_0-z)(z-1)}+O(1).
\eq
Thus,
\beq \label{4.28}
I(Z_0)={-}I(1)+2 \il_{|z|=z_{\rm sp}} \frac{z-z_{\rm sp}}{(Z_0-z)(z-1)} \ln \Bigl(1-z^{-s}A(z)\Bigr) dz+O(s^{-1/2}).
\eq
We next estimate the remaining integral in (\ref{4.28}). With the substitution $z=z(v)$, $-\frac12 \delta\leq v\leq\frac12 \delta$, we have $A(z(v))/z^s(v)=B \exp({-}s\eta v^2)$ with $0<B<1$ and $\eta>0$ bounded away from 1 and 0, respectively, and
\beq \label{4.29}
z(v)=z_{\rm sp}+iv+\sum_{n=2}^{\infty} c_n(iv)^n,~~~~~~-\tfrac12 \delta\leq v\leq\tfrac12 \delta,
\eq
where $c_n$ are real. Then we get with exponentially small error
\begin{eqnarray} \label{4.30}
\il_{|z|=z_{\rm sp}} \frac{z-z_{\rm sp}}{(Z_0-z)(z-1)} \ln \Bigl(1-z^{-s}A(z)\Bigr) dz =\il_{-\frac12\delta}^{\frac12\delta} \frac{(z(v)-z_{\rm sp}) z'(v)}{(Z_0-z(v))(z(v)-1)} \ln (1-B e^{-s\eta v^2}) dv.
\end{eqnarray}
Now we get from (\ref{4.22}) and (\ref{4.29}) that
\begin{eqnarray} \label{4.31}
(Z_0-z(v))(z(v)-1) & = & (z_{\rm sp}-1-iv+O(v^2))(z_{\rm sp}-1+iv+O(v^2)) \nonumber \\[3mm]
& & +~O \Bigl(s^{-1} (z_{\rm sp}-1+iv+O(v^2))\Bigr) \nonumber \\[3mm]
& = & |z_{\rm sp}-1|^2+v^2+O \Bigl(\Bigl(s^{-1}+v^2\Bigr)^{3/2}\Bigr).
\end{eqnarray}
Furthermore,
\beq \label{4.32}
(z(v)-z_{\rm sp}) z'(v)={-}v+O(v^2).
\eq
Thus
\beq \label{4.33}
\frac{(z(v)-z_{\rm sp}) z'(v)}{(Z_0-z(v))(z(v)-1)}=\frac{-v+O(v^2)}{|z_{\rm sp}-1|^2+v^2+O\Bigl(\Bigl(s^{-1}+v^2\Bigr)^{3/2}\Bigr)}.
\eq
Inserting this into the integral on the second line of (\ref{4.30}), we see that the $-v$ in (\ref{4.33}) cancels upon integration. Also
\beq \label{4.34}
\il_{-\frac12\delta}^{\frac12\delta} \frac{v^2}{|z_{\rm sp}-1|^2+v^2} \ln (1-B e^{-\frac12 s\eta v^2}) dv=O(s^{-1/2}),
\eq
and this finally shows that the integral in (\ref{4.30}) is $O(s^{-1/2})$. Then combining (\ref{4.19}), (\ref{4.26}), (\ref{4.28}), we get the result.

\subsection{Bounding the remaining integral} \label{subsec4.2}

We have from (\ref{2.3})
\beq \label{4.44}
P(Q>N)=\frac{c_0}{Z_0^{N+1}(1-Z_0)}-\frac{1}{2\pi i} \il_{|z|=R} \frac{Q(z)}{z^{N+1}(1-z)}  dz,
\eq
where $R\in(Z_0,|Z_{\pm1}|)$, and we intend to bound the integral at the right-hand side of (\ref{4.44}). We use in (\ref{4.44}) the $Q(z)$ as represented by the right-hand side of (\ref{2.1}) which is defined and analytic in $z$, $|z|<r$, $z\neq Z_k$. We write for $|z|<r$, $z\neq Z_k$
\beq \label{4.45}
\frac{Q(z)}{(1-z) z^{N+1}}=\frac{-1}{z^{N+2-s}}~\frac{s-\mu_A}{\prod_{j=1}^{s-1} (1-z_j)} ~\frac{1}{z^s-A(z)} \prod_{j=1}^{s-1} (1-z_j/z).
\eq
Now $s-\mu_A=\gamma\sqrt{s}$, and by Lemma~\ref{lem4.3} and (\ref{4.26}), we have $\prod_{j=1}^{s-1} (1-z_j)=P(1)\geq C \gamma\sqrt{s}$ for some $C>0$ independent of $s$. Hence $(s-\mu_A)/\prod_{j=1}^{s-1} (1-z_j)$ is bounded in $s$. Next, for $|z|\geq Z_0$, we have by Lemma~\ref{lem4.3}
\beq \label{4.46}
\prod_{j=1}^{s-1} (1-z_j/z)=\frac{z}{z-1} \exp (I(z)),
\eq
with $I(z)$ given by (\ref{4.12}) and admitting an estimate
\beq \label{4.47}
|I(z)|=O \Big(|z-z_{\rm sp}|^{-1} \il_{-\infty}^{\infty} \ln (1-B e^{-\frac12 s\eta t^2}) dt\Big)=O(1)
\eq
since $\sqrt{s}|z-z_{\rm sp}|$, $B\in(0,1)$ and $\eta>0$ are all bounded away from 0. Therefore, there remains to be considered $(z^s-A(z))^{-1}$. We show below that there is a $C>0$, independent of $s$, such that
\beq \label{4.48}
|z^s-A(z)|\geq C |z|^s
\eq
when $z$ is on a contour $K$ as in Figure \label{fig1}, consisting of a straight line segment
\beq \label{4.49}
z=\xi+i\eta,~~~~~~\xi={\rm Re} [\hat{Z}({\pm}\tfrac12)],~~~~~~{-} \frac{1}{\sqrt{s}} y_0\leq\eta\leq\frac{1}{\sqrt{s}} y_0,
\eq
and a portion of the circle
\beq \label{4.50}
|z|=R=\sqrt{{\rm Re}^2 [\hat{Z}({\pm}\tfrac12)]+\dfrac{1}{s} y_0^2}
\eq
that are joined at the points $({\rm Re}[\hat Z(\pm \tfrac12)],{\pm}\frac{1}{\sqrt{s}} y_0)$. Here
\beq \label{4.51}
\hat{Z}(t)=1+\frac{a_0}{\sqrt{s}} ((b_0^2-2\pi it)^{1/2}+b_0),
\eq
with $a_0,b_0>0$ given in (\ref{A10}) and independent of $s$, approximates the solution $z=Z(t)$, for real $t$ small compared to $s$, of the equation
\beq \label{4.52}
\frac{n}{s} \ln  X(z)-\ln  z=\frac{2\pi it}{s}
\eq
outside the unit disk, according to
\beq \label{4.53}
Z(t)=\hat{Z}(t)+O\Bigl(\frac{t}{s}\Bigr).
\eq
Thus on $K$ we have from (\ref{4.48})
\beq \label{4.54}
\Bigl|\frac{Q(z)}{(1-z) z^{N+1}}\Bigr|=O\Bigl(\frac{1}{(z-1) z^{N+1}}\Bigr),
\eq
and we estimate
\begin{align} \label{4.55}
& \Big|\frac{1}{2\pi i} \il_{|z|=R} \frac{Q(z)}{z^{N+1}(1-z)} dz\Big|  =  \Big|\frac{1}{2\pi i} \il_{z\in K} \frac{Q(z)}{z^{N+1}(1-z)} dz\Big| \nonumber \\
& =  O \Big(\Bigl(\frac{1}{{\rm Re} [\hat{Z}({\pm}\frac12)]}\Bigr)^{N+1} \il_{z\in K} \frac{|dz|}{|z-1|}\Big) = O \Big(\ln  s\Bigl(\frac{1}{{\rm Re} [\hat{Z}({\pm}\frac12)]}\Bigr)^{N+1}\Big).
\end{align}
Here we have used that $|z-1|\geq{\rm Re} [\hat{Z}({\pm}\tfrac12)]-1\geq E/\sqrt{s}$, $z\in K$, for some $E>0$ independent of $s$. Observing that
\beq \label{4.56}
\hat{Z}(0)=1+\frac{2a_0b_0}{\sqrt{s}},
\eq
\beq \label{4.57}
{\rm Re} [\hat{Z}({\pm}\tfrac12)]=1+\frac{a_0}{\sqrt{s}} [(\tfrac12 b_0^2+\tfrac12(b_0^4+\pi^2 )^{1/2})+b_0],
\eq
we see that
\begin{eqnarray} \label{4.58}
\bigg(\frac{\hat{Z}(0)}{{\rm Re} [\hat{Z}({\pm}\frac12)]}\bigg)^N & = & \Bigg(1-\frac{\dfrac{a_0}{\sqrt{s}} [(\frac12 b_0^2+\frac12(b_0^4+\pi^2)^{1/2})^{1/2}-b_0]} {1+\dfrac{a_0}{\sqrt{s}} [(\frac12 b_0^2+\frac12(b_0^4+\pi^2)^{1/2})^{1/2}+b_0]} \Bigg)^N \nonumber \\[3.5mm]
& = & O(\exp({-}\hat{D}N/\sqrt{s}))
\end{eqnarray}
for some $\hat{D}>0$ independent of $s$. Hence, by (\ref{4.53}), we see that the relative error in (\ref{4.44}) due to ignoring the integral at the right-hand side is of order $\exp({-}DN/\sqrt{s})$ with some $D>0$, independent of $s$.

We show the inequality (\ref{4.48}) for $z\in K$ using the following property of $X$: there is a $\delta>0$ and a $\vart_1\in(0,\pi/2)$ such that for any $R\in[1,1+\delta]$ the function $|X(R e^{i\vart})|$ is decreasing in $|\vart|\in[0,\vart_1]$ while
\beq \label{4.59}
\vart_1\leq|\vart|\leq\pi\Rightarrow |X(R e^{i\vart})|\leq |X(R e^{i\vart_1})|.
\eq
This property follows from strict maximality of $|X(e^{i\vart})|$ in $\vart\in[{-}\pi,\pi]$ at $\vart=0$ and analyticity of $X(z)$ in the disk $|z|<r$ (with $r>1$).

For the construction of the contour $K$ in (\ref{4.48}--\ref{4.49}), we consider the quantity
\beq \label{4.60}
n \ln  [X(1+v)]-\ln (1+v),
\eq
where $v$ is of the form
\beq \label{4.61}
v=\frac{2\gamma\mu}{\sigma^2\sqrt{s}}+\frac{x_0+iy}{\sqrt{s}}=\hat{Z}(0)-1+\frac{x_0+iy}{\sqrt{s}}
\eq
with $x_0>0$ fixed and varying $y\in\dR$. We choose $x_0$ such that the outer curve $Z(t)$ is crossed by $z=1+v$ near the points $Z({\pm}\frac12)$, where $z^s-A(z)$ equals $2z^s$. Thus, we choose
\beq \label{4.62}
x_0={\rm Re} [\sqrt{s} (\hat{Z}({\pm}\tfrac12)-\hat{Z}_0)]={\rm Re} [a_0((b_0^2+\pi i)^{1/2}-b_0)].
\eq
We have, as in the analysis in the appendix, that
\begin{eqnarray} \label{4.63}
n \ln  [X(1+v)]-s \ln (1+v) =\frac{2\gamma\mu}{\sigma^2} x_0+(x_0^2-y^2)+2i\Bigl(\frac{\gamma\mu}{\sigma^2}-x_0\Bigr) y+O(sv^3)+O(v^2\sqrt{s}) .
\end{eqnarray}
With $x_0>0$ fixed and independent of $s$, see (\ref{4.62}), the leading part of the right-hand side in (\ref{4.63}) is independent of $s$ and describes, as a function of the real variable $y$, a parabola in the complex plane with real part bounded from above by its real value at $y=0$ and that passes the imaginary axis at the points $\pm\pi i$. Therefore, this leading part has a positive distance to all points $2\pi ik$, integer $k$. Now take $y_0$ such that
\beq \label{4.64}
\frac{2\gamma\mu}{\sigma^2}+(x_0^2-y_0^2)={-} \Bigl(\frac{2\gamma\mu}{\sigma^2}+x_0^2\Bigr).
\eq
In Figure \ref{fig1} we show the curve $K$ (heavy), the approximation $\hat Z(t)$ of the outer curve, and the choice $y_0=\eta_0\sqrt{s}$ for the case that $\gamma=1$, $\mu/\sigma^2=2$, $s=100$.

It follows from the above analysis, with $v$ as in (\ref{4.61}), that
\beq \label{4.65}
1-\frac{X^n(1+v)}{(1+v)^s},~~~~~~-y_0\leq y\leq y_0,
\eq
is bounded away from 0 and has a value $1-c,1-c^{\ast}$ at $y={\pm} y_0$, where $c$ is bounded away from 1 and $|c|<1$. Now write
\beq \label{4.66}
R e^{i\vart_0}=\hat{Z}(0)+\frac{x_0+iy_0}{\sqrt{s}}=1+v_0.
\eq
When $s$ is large enough, we have that $R\in[1,1+\delta]$ and $0\leq\vart_0\leq\vart_1$, where $\delta$ and $\vart_1$ are as above in (\ref{4.59}). We have
\beq \label{4.67}
|X^n(1+v_0)|\leq |c| \ |1+v_0)|^s,
\eq
and by (\ref{4.59}) and monotonicity of $|X(R e^{i\vart})|$, $\vart_0\leq|\vart|\leq\vart_1$,
\beq \label{4.68}
|X^n(R e^{i\vart})|\leq |X^n(R e^{i\vart_0})|\leq |c| R^s,~~~~~~\vart_0\leq|\vart|\leq\pi.
\eq
Therefore, (\ref{4.48}) holds on $K$ with
\beq \label{4.68a}
C=\min \Bigl\{1-|c| , \min_{|y|\leq y_0} \Bigl| 1-\frac{X^n(1+v)}{(1+v)^s}\Bigr|\Bigr\}
\eq
positive and bounded away from 0 as $s$ gets large.

\begin{figure}
\begin{center}
 \includegraphics[width= .5 \linewidth, angle =90 ]{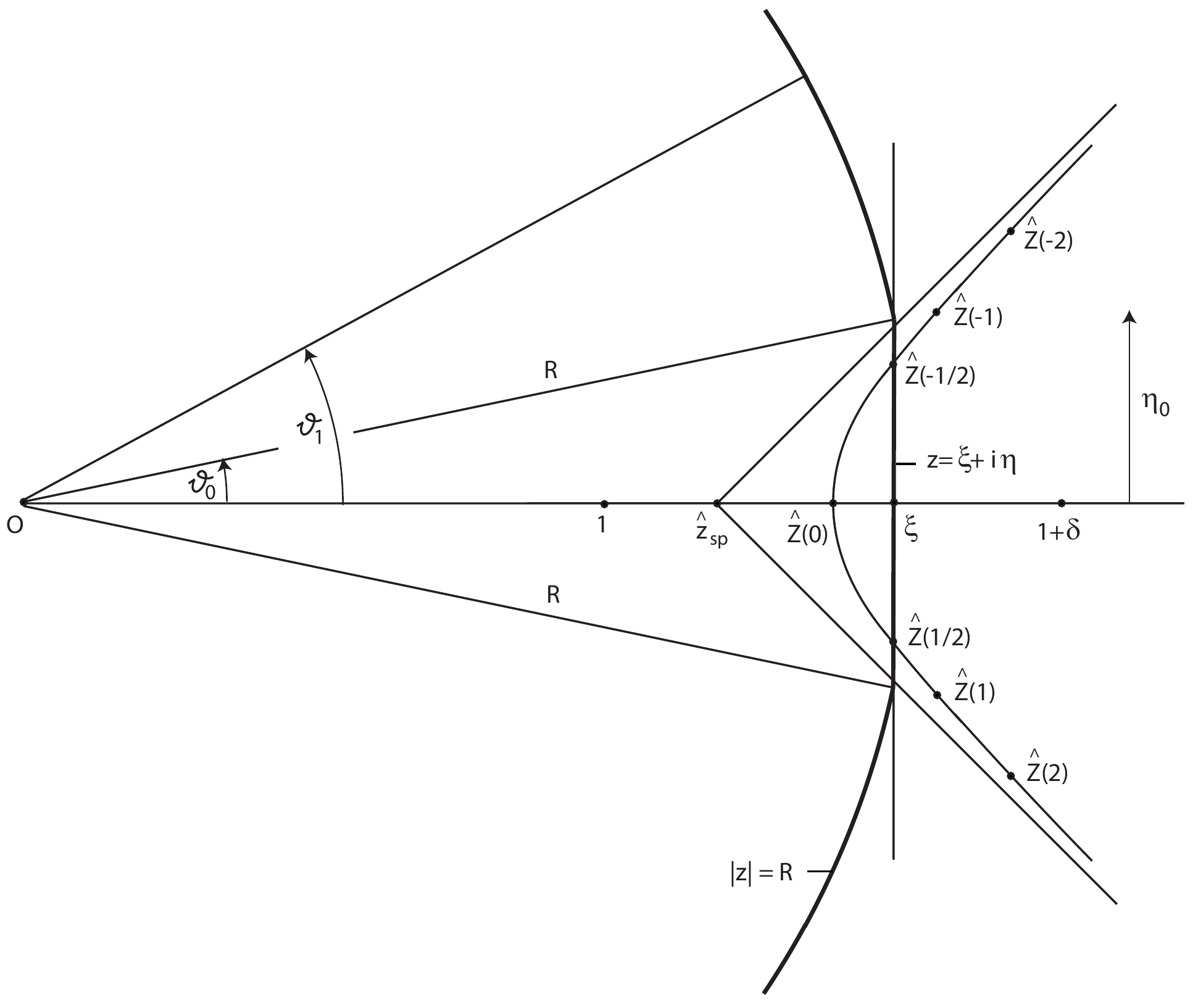}
 \end{center}
  \caption{Integration curve $K$ consisting of line segment $z=\xi+i\eta$, $-\eta_0\leq\eta\leq \eta_0$, where $\xi={\rm Re}[\hat Z(\pm \tfrac12)]$ and $\eta_0=y_0/\sqrt{s}$, and portion of the circle $|z|=R$ with $R=(\xi^2+\eta_0^2)^{1/2}$. Choice of parameters: $\gamma=1$, $\mu/\sigma^2=2$, $s=100$.
  }
  \label{fig1}
\end{figure}

\section{Correction terms and asymptotic expansion} \label{subsec4.3}

In this section we give a series expansion for the leading term in \eqref{4.16}
involving the Riemann zeta function. 
  We also show how to find an asymptotic series for $P(Q>N)$ as $N\pr\infty$ of which the term involving the dominant pole is the leading term. Before we do so, we first discuss how this leading term is related to the Gaussian random walk and a result of Chang and Peres \cite{changperes}.

\subsection{Connection with Gaussian random walk}\label{subGRW}
We know from  \cite[Theorem 3]{nd2} that under the critical many-sources scaling, the rescaled queueing process converges to a reflected Gaussian random walk. The latter is defined as
 $(S_\beta(k))_{k\geq 0}$ with $S_\beta(0)=0$ and
\begin{equation}
S_\beta(k)=Y_1+\ldots+Y_k
\end{equation}
with $Y_1,Y_2,\ldots$ i.i.d.~copies of a normal random variable with mean $-\beta$ and variance 1.
Assume $\beta>0$ (negative drift), and denote the all-time maximum of this random walk by ${M}_\beta$.

Denote by $Q^{(s)}$ the stationary congestion level for a fixed $s$ (that arises from taking
 $k\to \infty$ in \eqref{lind}).
Then, using $\rho_s=1-\gamma/\sqrt{s}$, with
 \begin{equation}\label{gammachoice}
 \gamma=\frac{\beta\sigma\sqrt{s}}{\mu\sqrt{n}},
\end{equation}
the spatially-scaled stationary queue length reaches the limit
$Q^{(s)}/(\sigma\sqrt{n}) \stackrel{d}{\to} {M}_\beta$ as $s,n\to\infty$ (see \cite{jelenkovic,nd1,nd2}).

The random variable ${M}_\beta$ was studied in  \cite{changperes,jllerch,cumulants}. In particular, \cite[Thm.~1]{jllerch} yields, for $\beta<2\sqrt{\pi}$,

\beq \label{1.5}
\mathbb{P}({M}_\beta=0)=\sqrt{2} \beta \exp \Bigl\{\frac{\beta}{\sqrt{2\pi}} \sum_{r=0}^{\infty} \frac{\zeta(\frac12-r)({-}\frac12 \beta^2)^r} {r! (2r+1)}\Bigr\},
\eq
and from \cite{changperes} we have $\mathbb{P}({M}_\beta> K)=h(\beta,K) e^{-2\beta K}$ with
\beq \label{1.6}
h(\beta,K)\pr h(\beta)=\exp \Bigl\{\frac{\beta\sqrt{2}}{\sqrt{\pi}}\:\sum_{r=0}^{\infty} \frac{\zeta(\frac12-r)({-}\frac12 \beta^2)^r} {r! (2r+1)}\Bigr\},
\eq
exponentially fast as $K\pr\infty$.

Hence, there are the approximations
\beq \label{estimate}
\mathbb{P}(Q> K \sqrt{n\sigma^2})\approx \mathbb{P}({M}_\beta>K)\approx  h(\beta)\cdot \e^{-2\beta K}, \quad {\rm as} \ n\to\infty,
\eq
where the second approximation holds for small values of $\beta$. We will now show how this second approximation in \eqref{estimate} follows from our leading term in the expansion.

\begin{prop} \label{prop4.5}
\beq \label{4.35}
\ln \Bigl(\frac{c_0}{1-Z_0}\Bigr)=\frac{2b_0}{\sqrt{\pi}} \sum_{r=0}^{\infty}  \frac{\zeta(\frac12-r)({-}b_0^2)^r}{r! (2r+1)}+ O(s^{-1/2}),~~~~~~0<b_0<\sqrt{2\pi}.
\eq
\end{prop}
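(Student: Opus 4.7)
The plan is to combine Proposition~\ref{prop4.4} with the explicit formula \eqref{1.5} for the atom $\mathbb{P}({M}_\beta=0)$. Under the identifications \eqref{320}, in particular $\beta=b_0\sqrt{2}$, one has $\sqrt{2}\beta=2b_0$, $\tfrac12\beta^2=b_0^2$ and $\beta/\sqrt{2\pi}=b_0/\sqrt{\pi}$, so that \eqref{1.5} rewrites as
$$
\ln\mathbb{P}({M}_\beta=0)=\ln(2b_0)+\frac{b_0}{\sqrt{\pi}}\sum_{r=0}^{\infty}\frac{\zeta(\tfrac12-r)(-b_0^2)^r}{r!(2r+1)}.
$$
Multiplying by $2$ and using $\ln(4b_0^2)=2\ln(2b_0)$ shows that the series on the right-hand side of \eqref{4.35} is exactly $2\ln\mathbb{P}({M}_\beta=0)-\ln(4b_0^2)$. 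In view of Proposition~\ref{prop4.4}, proving \eqref{4.35} therefore reduces to the rate estimate
$$
\ln Q(0)=\ln\mathbb{P}({M}_\beta=0)+O(s^{-1/2}),
$$
i.e.\ to showing that the atom of $Q$ at $0$ agrees with the atom of ${M}_\beta$ at $0$ up to a relative error of order $s^{-1/2}$.

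To obtain this rate I would work directly with the contour-integral representation \eqref{4.23} of $\ln[P(Q=0)]$. The dedicated saddle-point method used in the proof of Proposition~\ref{prop4.4} localizes the integral along $|z|=z_{\rm sp}$ to a window of width $O(s^{-1/2})$ about $z_{\rm sp}$, where \eqref{4.29} and the identity $A(z(v))/z(v)^s=B e^{-s\eta v^2}$ hold with $B\in(0,1)$ and $\eta>0$ bounded away from $1$ and $0$. Rescaling $v=u/\sqrt{s}$ turns \eqref{4.23} into a Gaussian-type integral whose leading order coincides with the integral used in \cite{jllerch} to derive \eqref{1.5}. The Taylor coefficients of $-\ln z+(n/s)\ln X(z)$ beyond the quadratic part involve cumulants of $X$ of order three and higher carrying an additional factor $s^{-1/2}$, which is why every correction enters at the $O(s^{-1/2})$ level.

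The main obstacle will be to make this saddle-point comparison rigorous with a uniform $O(s^{-1/2})$ bound. Two points deserve attention. First, one must show that the Gaussian leading term of the expansion of \eqref{4.23} equals $\ln\mathbb{P}({M}_\beta=0)$ on the nose, not merely up to an undetermined constant; this amounts to matching the resulting Gaussian integral with the Lerch-transcendent representation used in \cite{jllerch} to produce the $\zeta$-series. Second, one must control the contribution from $|v|>\tfrac12\delta$ in \eqref{4.30}, which is handled exactly as in Subsection~\ref{subsec4.2} via strict maximality of $|X(e^{i\vartheta})|$ at $\vartheta=0$. Tracking the dependence on $b_0$ throughout is needed because the series in \eqref{4.35} approaches its radius of convergence as $b_0\uparrow\sqrt{2\pi}$; some care is required to keep the implicit constant in the $O(s^{-1/2})$ bound from blowing up near the endpoint. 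Once these points are settled, substituting the rate estimate into Proposition~\ref{prop4.4} yields \eqref{4.35}.
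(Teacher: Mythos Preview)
Your approach is essentially the same as the paper's: reduce via Proposition~\ref{prop4.4} to the rate estimate $\ln Q(0)=\ln\mathbb{P}(M_\beta=0)+O(s^{-1/2})$ and then substitute the $\zeta$-series for $\ln\mathbb{P}(M_\beta=0)$ with $\beta=b_0\sqrt{2}$. The only difference is that the paper does not re-derive the rate estimate but simply quotes it from \cite[Subsection~5.3]{britt1} (and the $\zeta$-series from \cite{cumulants}), so the saddle-point comparison you sketch in the second half is unnecessary here.
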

\begin{proof}It is shown in \cite{britt1}, Subsection~5.3 that
\beq \label{4.37}
\ln  [Q(0)]=\ln  [P(M_\beta=0)]+O(s^{-1/2}),
\eq
in which we take the drift parameter $\beta$ according to
\beq \label{4.38}
\beta=b_0\sqrt{2}=\gamma \sqrt{\dfrac{\mu}{\sigma^2}}.
\eq
From \cite{cumulants} we have
\beq \label{4.39}
\ln  [P(M_\beta=0)]=\ln (2b_0)+\frac{b_0}{\sqrt{\pi}} \sum_{r=0}^{\infty} \frac{\zeta(\frac12-r)({-}b_0^2)^r}{r! (2r+1)},~~~~~0<b_0<\sqrt{2\pi}.
\eq
Then from Proposition~\ref{prop4.4}, (\ref{4.37}) and (\ref{4.39}), we get the results in (\ref{4.35}).
\end{proof}

\subsection{Asymptotic series for $P(Q>N)$ as $N\pr\infty$} \label{subsubsec4.3.2}

When inspecting the argument that leads to (\ref{2.3}), it is obvious that one can increase the radius $R$ of the integration contour to values $R_M$ between $|Z({\pm}M)|$ and $|Z({\pm}(M+1))|$ when $M=1,2,...$ is fixed. Here it must be assumed that $s$ is so large that $Z_k$ increases in $k=0,1,...,M+1$. Then, the poles of $Q(z)$ at $z=Z_{\pm k}$, $k=0,1,...,M $, are inside $|z|=R_M$, and we get
\begin{eqnarray} \label{4.69}
P(Q>N)=\frac{c_0}{(1-Z_0) Z_0^{N+1}}  +  2 \sum_{k=1}^M {\rm Re} \Bigl[\frac{c_k}{(1-Z_k) Z_k^{N+1}}\Bigr] - \frac{1}{2\pi i} \il_{|z|=R_M} \frac{Q(z)}{(1-z) z^{N+1}} dz.
\end{eqnarray}
As in Subsection~\ref{subsec4.2}, one can argue that the integral on the second line of (\ref{4.69}) is relatively small compared to $|Z_M|^{-N-1}$ when $R_M$ is chosen between but away from $|Z_M|$ and $|Z_{M+1}|$.

We now need the following result.
\begin{lem} \label{cor4.2}
There holds
\beq \label{4.9}
{-} \frac{s-\mu_A}{sZ_k^{s-1}-A'(Z_k)}=\frac{b_0}{\sqrt{b_0^2-2\pi ik}}~\frac{1}{Z_k^{s-1}} \Bigl(1+O\Bigl(\frac{1+|k|}{\sqrt{s}}\Bigr)\Bigr)
\eq
when $k=o(s)$.
\end{lem}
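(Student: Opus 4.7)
The plan is to generalise the argument that established Lemma \ref{lem4.1} to the index $k$; indeed, setting $k=0$ in the claimed formula gives back (\ref{4.2}) since then $b_0/\sqrt{b_0^2}=1$. From the defining equation $Z_k^s=A(Z_k)=X^n(Z_k)$ together with $s-\mu_A=\gamma\sqrt{s}$ and $A'(Z_k)=nX'(Z_k)X^{n-1}(Z_k)$, the algebra producing (\ref{4.4}) carries over verbatim and yields
\begin{equation*}
\frac{s-\mu_A}{sZ_k^{s-1}-A'(Z_k)} \;=\; \frac{\gamma/\sqrt{s}}{1-(1-\gamma/\sqrt{s})\,\dfrac{X'(Z_k)Z_k}{X'(1)X(Z_k)}}\cdot\frac{1}{Z_k^{s-1}}.
\end{equation*}
Everything thus reduces to an asymptotic evaluation of the denominator.

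Write $Z_k=1+d_k/\sqrt{s}+O((1+|k|)/s)$ with $d_k:=a_0(b_0+\sqrt{b_0^2-2\pi ik})$, using (\ref{3.3}) together with the sharpened error implicit in (\ref{4.53}). Expanding $X(Z_k)$ and $X'(Z_k)$ around $z=1$ as in (\ref{4.6})--(\ref{4.7}) and invoking $X''(1)/\mu+1-\mu=\sigma^2/\mu$ exactly as in (\ref{4.8}), one obtains
\begin{equation*}
1-(1-\gamma/\sqrt{s})\,\tfrac{X'(Z_k)Z_k}{X'(1)X(Z_k)} \;=\; \frac{\gamma}{\sqrt{s}} - \frac{d_k\sigma^2}{\mu\sqrt{s}} + O\!\left(\tfrac{1+|k|}{s}\right).
\end{equation*}
The constants in (\ref{3.4}) satisfy $a_0b_0\sigma^2/\mu=\gamma$, hence $d_k\sigma^2/\mu=\gamma+(\gamma/b_0)\sqrt{b_0^2-2\pi ik}$. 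The $\gamma/\sqrt{s}$-terms cancel, leaving
\begin{equation*}
1-(1-\gamma/\sqrt{s})\,\tfrac{X'(Z_k)Z_k}{X'(1)X(Z_k)} \;=\; -\frac{\gamma\sqrt{b_0^2-2\pi ik}}{b_0\sqrt{s}}\bigl(1+O((1+|k|)/\sqrt{s})\bigr),
\end{equation*}
whose reciprocal, multiplied by $\gamma/\sqrt{s}$, produces the asserted front factor $b_0/\sqrt{b_0^2-2\pi ik}$.

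The main obstacle is the uniformity of the remainder in $k$: since $|d_k|\sim C\sqrt{|k|}$ for large $|k|$, the second-order Taylor remainders for $X$ and $X'$ are of absolute size $O(|k|/s)$, and dividing by the leading denominator of size $\asymp\sqrt{(1+|k|)/s}$ produces relative error $O(\sqrt{(1+|k|)/s})$, which is dominated by the stated $O((1+|k|)/\sqrt{s})$. The inversion is non-degenerate because $|\sqrt{b_0^2-2\pi ik}|\ge\sqrt{b_0^2+2\pi|k|}$ is bounded away from zero uniformly in $k$, and the hypothesis $k=o(s)$ ensures that all the $(1+|k|)/s$ corrections indeed tend to $0$. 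The negative-$k$ case is obtained by the conjugation symmetry $Z_{-k}=Z_k^\ast$ recorded in (\ref{3.3}), which propagates through every step of the computation above.
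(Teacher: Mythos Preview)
Your argument is correct and is precisely the route the paper intends: its own proof merely reads ``This follows from the appendix with a similar argument as in the proof of Lemma~\ref{lem4.1},'' and you have supplied exactly those details. One small slip: the inequality $|\sqrt{b_0^2-2\pi ik}|\ge\sqrt{b_0^2+2\pi|k|}$ is false (for instance $b_0=1$, $k=1$), but what you actually need---that $|\sqrt{b_0^2-2\pi ik}|=(b_0^4+4\pi^2k^2)^{1/4}\ge b_0>0$ and is $\asymp\sqrt{1+|k|}$---holds and suffices for both the non-degeneracy of the inversion and the error control.
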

\begin{proof}
This follows from the appendix with a similar argument as in the proof of Lemma~\ref{lem4.1}.
\end{proof}
As to the terms in the series in (\ref{4.69}), we have for bounded $k$, see Lemma~\ref{cor4.2},
\begin{eqnarray} \label{4.70}
\frac{c_k}{1-Z_k} & = & - \frac{s-\mu_A}{sZ_k^{s-1}-A'(Z_k)} \prod_{j=1}^{s-1} \frac{Z_k-z_j}{1-z_j} \nonumber \\[3.5mm]
& = & \frac{b_0}{\sqrt{b_0^2-2\pi ik}}\cdot\prod_{j=1}^{s-1} \frac{1-z_j/Z_k}{1-z_j}\cdot\Bigl( 1+O(s^{-1/2})\Bigr).
\end{eqnarray}
Furthermore, according to Lemma~\ref{lem4.3},
\begin{eqnarray} \label{4.71}
\ln  \Bigl[\prod_{j=1}^{s-1} \frac{1-z_j/Z_k}{1-z_j}\Bigr] & = & I(Z_k)-I(1)-\ln \Bigl(1-\frac{1}{Z_k}\Bigr)-\ln (\gamma\sqrt{s}) \nonumber \\[3.5mm]
& = & I(Z_k)-I(1)-\ln  [2b_0(b_0+\sqrt{b_0^2-2\pi ik})]+ O(s^{-1/2}).
\end{eqnarray}
Thus, we get the following result.
\begin{prop} \label{prop4.7}
For bounded $k\in\dZ$,
\beq \label{4.72}
\frac{c_k}{1-Z_k}=\frac{\exp(I(Z_k)-I(1))}{2\sqrt{b_0^2-2\pi ik}  (b_0+\sqrt{b_0^2-2\pi ik})} \Bigl(1+O(s^{-1/2})\Bigr).
\eq
\end{prop}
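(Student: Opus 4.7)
The plan is to combine the two displays (\ref{4.70}) and (\ref{4.71}) that immediately precede the statement; Proposition~\ref{prop4.7} is essentially a one-line algebraic consequence of them, so the proof proposal is short.

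First I would exponentiate (\ref{4.71}), which reads
\[
\ln \Bigl[\prod_{j=1}^{s-1}\frac{1-z_j/Z_k}{1-z_j}\Bigr]
= I(Z_k)-I(1)-\ln\bigl[2b_0(b_0+\sqrt{b_0^2-2\pi ik})\bigr]+O(s^{-1/2}),
\]
to obtain the multiplicative form
\[
\prod_{j=1}^{s-1}\frac{1-z_j/Z_k}{1-z_j}
=\frac{\exp(I(Z_k)-I(1))}{2b_0\bigl(b_0+\sqrt{b_0^2-2\pi ik}\bigr)}\bigl(1+O(s^{-1/2})\bigr).
\]
Here I would note that one can pass from the additive $O(s^{-1/2})$ error under $\ln$ to a multiplicative $1+O(s^{-1/2})$ factor because, for $k$ bounded, the leading term on the right of (\ref{4.71}) is $O(1)$, so exponentiation preserves the size of the error.

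Then I would substitute this into (\ref{4.70}):
\[
\frac{c_k}{1-Z_k}
=\frac{b_0}{\sqrt{b_0^2-2\pi ik}}\cdot\frac{\exp(I(Z_k)-I(1))}{2b_0\bigl(b_0+\sqrt{b_0^2-2\pi ik}\bigr)}\bigl(1+O(s^{-1/2})\bigr),
\]
and cancel the common factor $b_0$ in numerator and denominator, which yields exactly
\[
\frac{c_k}{1-Z_k}
=\frac{\exp(I(Z_k)-I(1))}{2\sqrt{b_0^2-2\pi ik}\,\bigl(b_0+\sqrt{b_0^2-2\pi ik}\bigr)}\bigl(1+O(s^{-1/2})\bigr),
\]
as claimed. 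The two factors $(1+O(s^{-1/2}))$ coming from (\ref{4.70}) and from the exponentiated (\ref{4.71}) multiply to a single $(1+O(s^{-1/2}))$ under the assumption that $k$ is bounded.

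There is no genuine obstacle here; the only point to monitor carefully is the uniformity of the error terms in $k$, but this is already contained in Lemma~\ref{cor4.2} (which is stated in the form $1+O((1+|k|)s^{-1/2})$) and in Lemma~\ref{lem4.3} applied at $Z=Z_k$. For bounded $k$ both error terms reduce to $O(s^{-1/2})$, so the combination is immediate. The nontrivial work has already been done upstream in proving Lemma~\ref{lem4.1}, Lemma~\ref{cor4.2}, and Lemma~\ref{lem4.3}, together with the asymptotics (\ref{3.3}) for the dominated poles $Z_k$ that feed the factor $\sqrt{b_0^2-2\pi ik}$ in (\ref{4.9}) and $2b_0(b_0+\sqrt{b_0^2-2\pi ik})$ in (\ref{4.71}).
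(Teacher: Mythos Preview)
Your proposal is correct and matches the paper's approach exactly: the paper derives (\ref{4.70}) and (\ref{4.71}) and then simply states ``Thus, we get the following result,'' so Proposition~\ref{prop4.7} is precisely the algebraic combination (exponentiate (\ref{4.71}), insert into (\ref{4.70}), cancel $b_0$) that you have written out.
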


We aim at approximating $I(Z_k)$, showing, in particular, that $c_k/(1-Z_k)\neq0$ is bounded away from $0$ for bounded $k$ and large $s$. To that end, we conduct the dedicated saddle point analysis for $I(Z_k)$. We have for $|Z|\geq Z_0$, ${\rm Re}(Z)>z_{\rm sp}$,
\begin{eqnarray} \label{4.73}
I(Z) & = & \frac{1}{2\pi i} \il_{|z|=z_{\rm sp}}  \frac{\ln \Bigl(1-z^{-s}A(z)\Bigr)}{Z-z} dz \nonumber \\[3.5mm]
& = & \frac{1}{2\pi i} \il_{-\frac12\delta}^{\frac12\delta}  \frac{z(v)}{Z-z(v)} \ln (1-B e^{-\frac12 s\eta v^2}) dv,
\end{eqnarray}
with exponentially small error in the last identity as $s\pr\infty$.
With $g(z)={-}\ln  z+\frac{n}{s} \ln  X(z)$, we let
\beq \label{4.74}
B=\exp(sg(z_{\rm sp}))=e^{-b_0^2}\Bigl(1+O(s^{-1/2})\Bigr) ,~~~~\eta=g''(z_{\rm sp})=\frac{\sigma^2}{\mu}+O(s^{-1/2}),
\eq
and $z(v)$ is as in (\ref{4.29}) and defined implicitly by $g(z(v))=g(z_{\rm sp})-\frac12 v^2g''(z_{\rm sp})$. We then find, by using $z(v)=z_{\rm sp}+iv+O(v^2)$ and $z'(v)=i+O(v)$, that
\begin{eqnarray} \label{4.75}
I(Z) & = & \frac{-1}{2\pi i} \il_{-\infty}^{\infty} \frac{\ln (1-B e^{-\frac12 s\eta v^2})}{v+i(Z-z_{\rm sp})} dv+O(s^{-1/2}) \nonumber \\[3.5mm]
& = & \frac{-1}{2\pi i} \il_{-\infty}^{\infty} \frac{\ln (1-B e^{-t^2})}{t+i(Z-z_{\rm sp}) \sqrt{s\eta/2}} dt+ O(s^{-1/2}),
\end{eqnarray}
where in the last step the substitution $t=v \sqrt{s\eta/2}$ has been made. Combining in the last integrand in (\ref{4.75}) the values at $t$ and $-t$ for $t\geq0$, we get the following result.
\begin{prop} \label{prop4.8}
For $|Z|\geq Z_0$, ${\rm Re}(Z)>z_{\rm sp}$,
\beq \label{4.76}
I(Z)=J(d)+O(s^{-1/2}),
\eq
where $d=(Z-z_{\rm sp}) \sqrt{s\eta/2}$, and
\beq \label{4.77}
J(d)=\frac{1}{\pi} \il_0^{\infty} \frac{d}{t^2+d^2} \ln (1-B e^{-t^2}) dt.
\eq
\end{prop}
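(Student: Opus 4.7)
The plan is to start from the two forms of $I(Z)$ already in hand at (4.73), push the saddle-point expansion through the prefactor $1/(Z-z)$, rescale, and then fold the resulting integral about $t=0$. By (4.73), after the saddle-point substitution $z=z(v)$ defined implicitly by $g(z(v))=g(z_{\rm sp})-\tfrac12\eta v^2$ (with $g(z)=-\ln z+\tfrac{n}{s}\ln X(z)$), we have
\[
I(Z)=\frac{1}{2\pi i}\il_{-\frac12\delta}^{\frac12\delta}\frac{z'(v)}{Z-z(v)}\,\ln\bigl(1-B\e^{-\frac12 s\eta v^2}\bigr)\,dv+(\text{exp. small}),
\]
with $B,\eta$ as in (4.74) and the expansion (4.29) giving $z(v)=z_{\rm sp}+iv+O(v^2)$, $z'(v)=i+O(v)$. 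The absorbed factor $i$ from $z'(v)$ combines with $1/(2\pi i)$ to yield the sign and normalization on the right-hand side of (4.75) after using the elementary identity $1/(Z-z_{\rm sp}-iv)=i/(v+i(Z-z_{\rm sp}))$.

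First I would justify the contour truncation to $|v|\leq\tfrac12\delta$ by the strict-maximum property of $|X(e^{i\vartheta})|$ recalled in Subsection~4.2, which forces $|z^{-s}A(z)|\leq e^{-cs}$ for $|z|=z_{\rm sp}$ outside a fixed arc around $z_{\rm sp}$. Next, for the replacement
\[
\frac{z'(v)}{Z-z(v)}=\frac{i}{Z-z_{\rm sp}-iv}+R(v,Z),
\]
a direct Taylor expansion gives $|R(v,Z)|\le C|v|^2/|Z-z_{\rm sp}-iv|^2 + C|v|/|Z-z_{\rm sp}-iv|$ on the truncated range. Because $\mathrm{Re}(Z)>z_{\rm sp}$, the denominator $|Z-z_{\rm sp}-iv|$ is bounded below by $\mathrm{Re}(Z)-z_{\rm sp}$, so pairing the remainder against the factor $\ln(1-B\e^{-\frac12 s\eta v^2})$ and invoking the scaling $v=t\sqrt{2/(s\eta)}$ reduces the remainder integral to $O(s^{-1/2})$ uniformly in $Z$ subject to the standing hypotheses. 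Extending the truncated range back to $(-\infty,\infty)$ costs only an exponentially small term.

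Third, I would perform the substitution $t=v\sqrt{s\eta/2}$ explicitly. Then $dv=\sqrt{2/(s\eta)}\,dt$ and $v+i(Z-z_{\rm sp})=\sqrt{2/(s\eta)}\,(t+id)$ with $d=(Z-z_{\rm sp})\sqrt{s\eta/2}$, so the two square-root factors cancel and one arrives at the second displayed identity in (4.75) with relative error $O(s^{-1/2})$. Since $\mathrm{Re}(d)>0$, the kernel $1/(t+id)$ is a bounded, integrable function against $\ln(1-B\e^{-t^2})$ on $\dR$.

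Finally, combining the values of the integrand at $t$ and $-t$ via
\[
\frac{1}{t+id}+\frac{1}{-t+id}=\frac{-2id}{t^2+d^2},
\]
and multiplying by $-1/(2\pi i)$ converts the integral over $(-\infty,\infty)$ into $\frac{1}{\pi}\il_0^\infty \frac{d}{t^2+d^2}\ln(1-B\e^{-t^2})\,dt=J(d)$, giving (4.76). The main obstacle is the uniform-in-$Z$ control of the Taylor remainder $R(v,Z)$ above: the $v^2/|Z-z_{\rm sp}-iv|^2$ piece is harmless once one exploits $\mathrm{Re}(Z)>z_{\rm sp}$, but one must check the estimate remains $O(s^{-1/2})$ as $Z$ moves far from $z_{\rm sp}$ (so that $d$ grows), which is where the $1/|Z-z_{\rm sp}|$ lower bound on the denominator is used in conjunction with the fact that the $\ln$-factor already confines $v$ to the scale $v=O(s^{-1/2})$.
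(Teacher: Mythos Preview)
Your proposal is correct and follows essentially the same route as the paper: start from (4.73), replace $z'(v)/(Z-z(v))$ by $i/(Z-z_{\rm sp}-iv)=-1/(v+i(Z-z_{\rm sp}))$ using the expansion (4.29), rescale via $t=v\sqrt{s\eta/2}$, and fold about $t=0$ to obtain $J(d)$. The paper compresses the remainder control into the single phrase ``by using $z(v)=z_{\rm sp}+iv+O(v^2)$ and $z'(v)=i+O(v)$'', whereas you spell out the Taylor remainder $R(v,Z)$ and bound it against the localizing $\ln$-factor; this is the same argument, just made more explicit.
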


In the context of Proposition~\ref{prop4.7}, we consider
\beq \label{4.78}
d=d_k=(Z_k-z_{\rm sp}) \sqrt{ s\eta/2 },
\eq
with
\beq \label{4.79}
z_{\rm sp}=1+\frac{a_0b_0}{\sqrt{s}}+O(s^{-1}),~~~~~~Z_k=1+\frac{a_0}{\sqrt{s}}  (\sqrt{b_0^2-2\pi ik}+b_0)+O(s^{-1}).
\eq
Using the definitions of $a_0$, $b_0$ in (\ref{A10}) and $\eta$ in (\ref{4.74}), we get
\beq \label{4.80}
d_k=\hat{d}_k+O(s^{-1/2})~;~~~~~~\hat{d}_k=(b_0^2-2\pi ik)^{1/2}.
\eq
We have that
\beq \label{4.81}
|\hat{d}_k|\geq b_0,~~~~~~{\rm arg}(\hat{d}_k)\in({-}\tfrac14 \pi,\tfrac14 \pi), ~~~~~~k\in\dZ.
\eq
Since for $t\geq0$ and ${\rm arg}(d)\in({-}\frac14 \pi,\tfrac14 \pi)$, we have
\beq \label{4.82}
\ln (1-B e^{-t^2})<0,~~~~~~{\rm arg}\Bigl(\frac{d}{t^2+d^2}\Bigr)\in({-}\tfrac14 \pi,\tfrac14 \pi),
\eq
we see that we have complete control on the quantities $J(\hat{d}_k)$ (also note (\ref{4.74}) for this purpose). Using that $-I(1)=I(Z_0)+O(s^{-1/2})$, see (\ref{4.28}), we get the following result.
\begin{prop} \label{prop4.9}
For bounded $k\in\dZ$,
\beq \label{4.83}
\frac{c_k}{1-Z_k}=\frac{\exp(J(\hat{d}_k)+J(\hat{d}_0))}{2\hat{d}_k(\hat{d}_k+\hat{d}_0)} +O(s^{-1/2}),
\eq
where the leading quantity in (\ref{4.83}) $\neq 0,\infty$,  $\hat{d}_k$ is given in (\ref{4.80}) with $\hat{d}_0=b_0$, and $J$ is given in (\ref{4.77}).
\end{prop}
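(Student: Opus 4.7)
The proof plan is to assemble Proposition~\ref{prop4.9} from the three preceding ingredients, Proposition~\ref{prop4.7} (which factors $c_k/(1-Z_k)$ in terms of $I(Z_k)-I(1)$), Proposition~\ref{prop4.8} (which replaces $I$ by $J$ with an $O(s^{-1/2})$ error), and the approximation $d_k=\hat d_k+O(s^{-1/2})$ from \eqref{4.80}, and then verify that the leading quantity is bounded away from $0$ and $\infty$ using the sign/argument information in \eqref{4.81}--\eqref{4.82}.

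More concretely, the first step is to rewrite Proposition~\ref{prop4.7} using the identifications $\sqrt{b_0^2-2\pi ik}=\hat d_k$ and $b_0=\hat d_0$, so that the denominator is exactly $2\hat d_k(\hat d_k+\hat d_0)$. The numerator is $\exp(I(Z_k)-I(1))$, and here one combines two facts: Proposition~\ref{prop4.8} applied at $Z=Z_k$ gives $I(Z_k)=J(d_k)+O(s^{-1/2})$, while the derivation leading to \eqref{4.28} gives $-I(1)=I(Z_0)+O(s^{-1/2})$, which combined with Proposition~\ref{prop4.8} at $Z=Z_0$ yields $-I(1)=J(d_0)+O(s^{-1/2})=J(\hat d_0)+O(s^{-1/2})$ since $d_0=\hat d_0=b_0$ up to an $O(s^{-1/2})$ shift.

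The next step is to replace $d_k$ by $\hat d_k$ inside $J$. For this one needs $J(d_k)=J(\hat d_k)+O(s^{-1/2})$, which follows because $J$ defined by \eqref{4.77} is analytic in $d$ in the sector $\operatorname{arg}(d)\in(-\tfrac{\pi}{4},\tfrac{\pi}{4})$, and by \eqref{4.81} the point $\hat d_k$ lies strictly inside this sector with $|\hat d_k|\geq b_0$, so derivative bounds for $J$ near $\hat d_k$ are uniform in bounded $k$; a first-order Taylor expansion together with $d_k-\hat d_k=O(s^{-1/2})$ then does the job. At this point one has
\[
\frac{c_k}{1-Z_k}=\frac{\exp(J(\hat d_k)+J(\hat d_0)+O(s^{-1/2}))}{2\hat d_k(\hat d_k+\hat d_0)}\bigl(1+O(s^{-1/2})\bigr).
\]
Because $J(\hat d_k)+J(\hat d_0)$ is a bounded (complex) number, independent of $s$, one can extract the $O(s^{-1/2})$ from the exponent as a multiplicative $1+O(s^{-1/2})$ factor and then convert the multiplicative $O(s^{-1/2})$ into an additive $O(s^{-1/2})$ error, yielding the claimed identity.

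Finally, the ``$\neq 0,\infty$'' clause is checked directly: $\hat d_k\neq 0$ since $|\hat d_k|\geq b_0>0$, and $\hat d_k+\hat d_0\neq 0$ because both summands have strictly positive real part by \eqref{4.81}, while $J(\hat d_k)$ and $J(\hat d_0)$ are finite because the integrand in \eqref{4.77} is integrable (logarithmic singularity in $t$ cancels against $t^2+d^2$, and exponential decay at $\infty$). The main obstacle I foresee is the replacement step $J(d_k)\to J(\hat d_k)$: one has to argue that the derivative $J'(d)$ is bounded on a neighborhood of $\hat d_k$ uniformly in the parameter $B=B(s)\to e^{-b_0^2}$ and in bounded $k$, but this follows from differentiating under the integral and using $\hat d_k$ being bounded away from the imaginary axis via \eqref{4.81}--\eqref{4.82}.
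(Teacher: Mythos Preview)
Your proposal is correct and follows essentially the same route as the paper: combine Proposition~\ref{prop4.7}, the relation $-I(1)=I(Z_0)+O(s^{-1/2})$ from \eqref{4.28}, Proposition~\ref{prop4.8}, and the approximation $d_k=\hat d_k+O(s^{-1/2})$ from \eqref{4.80}, with \eqref{4.81}--\eqref{4.82} supplying the control needed for the ``$\neq 0,\infty$'' clause. If anything, you are more explicit than the paper about the replacement $J(d_k)\to J(\hat d_k)$ and the hidden $s$-dependence of $J$ through $B$; one small imprecision is your remark about a ``logarithmic singularity in $t$'' in \eqref{4.77}, which does not occur since $B<1$, but this does not affect the argument.
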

\begin{thm} \label{thm4.10}
There is the asymptotic series
\beq \label{4.84}
P(Q>N)\sim{\rm Re} \Bigl[\frac{c_0}{(1-Z_0) Z_0^{N+1}}+2 \sum_{k=1}^{\infty} \frac{c_k}{(1-Z_k)   Z_k^{N+1}}\Bigr],
\eq
where the ratio of the terms in the series with index $M$ and $M-1$ is $O(|Z_{M-1}/Z_M|^N)$.
\end{thm}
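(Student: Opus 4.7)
The plan is to iterate the contour-shifting argument of Subsection~\ref{subsec4.2}, passing through each ring $|Z_M|<|z|<|Z_{M+1}|$ one at a time. The starting point is the exact identity \eqref{4.69}, valid for every fixed $M\geq 1$ once $s$ is large enough that $|Z_0|<|Z_1|<\cdots<|Z_{M+1}|$; this ordering follows from Lemma~\ref{lemdis} because the leading terms $\hat{Z}({\pm}k)$ in \eqref{3.3} have strictly increasing modulus in $k$. It therefore suffices to establish (a) that each front factor $c_k/(1-Z_k)$ is nonzero and bounded, so that the $k$-th series term has magnitude of exact order $|Z_k|^{-(N+1)}$, and (b) that the residual contour integral on $|z|=R_M$ is of strictly smaller order than the next term in the series.

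Point (a) is supplied directly by Proposition~\ref{prop4.9}: the leading expression $\exp(J(\hat{d}_k)+J(\hat{d}_0))/[2\hat{d}_k(\hat{d}_k+\hat{d}_0)]$ is finite and nonzero for every fixed $k\in\dZ$ and every $b_0>0$, since $|\hat{d}_k|\geq b_0$ and ${\rm arg}(\hat{d}_k)\in({-}\tfrac14\pi,\tfrac14\pi)$ by \eqref{4.81}. Dividing the $M$-th term by the $(M-1)$-th then produces $(Z_{M-1}/Z_M)^{N+1}$ times an $O(1)$ prefactor built from these bounded quantities, giving the announced ratio $O(|Z_{M-1}/Z_M|^N)$, since $|Z_{M-1}|<|Z_M|$.

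For point (b) I would construct, in analogy with Subsection~\ref{subsec4.2}, a contour $K_M$ consisting of a vertical segment $z=\xi_M+i\eta$, $|\eta|\leq y_M/\sqrt{s}$, with $\xi_M={\rm Re}[\hat{Z}({\pm}(M+\tfrac12))]$, closed by a circular arc on $|z|=R_M=(\xi_M^2+y_M^2/s)^{1/2}$. The shift $x_M=\sqrt{s}\,(\xi_M-{\rm Re}\,\hat{Z}(0))={\rm Re}[a_0(\sqrt{b_0^2-(2M+1)\pi i}-b_0)]$ places the vertical segment midway between the pole curves $Z({\pm}M)$ and $Z({\pm}(M+1))$, and $y_M$ is chosen from the analog of \eqref{4.64} so that the leading quadratic expansion $n\ln X(1+v)-s\ln(1+v)$, restricted to this segment, traces a parabola whose image stays at positive distance from every lattice point $2\pi ik$. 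This yields an inequality $|z^s-A(z)|\geq C_M|z|^s$ on $K_M$ for some $C_M>0$ independent of $s$, whence a repetition of \eqref{4.55} gives
\beq
\Bigl|\frac{1}{2\pi i}\il_{|z|=R_M}\frac{Q(z)}{z^{N+1}(1-z)}\,dz\Bigr|=O\bigl(\ln s\cdot R_M^{-(N+1)}\bigr),
\eq
and the analog of \eqref{4.58} shows that $R_M^{-1}$ is exponentially smaller than $|Z_M|^{-1}$ in $N/\sqrt{s}$, so the tail integral is dominated by the next series term. The main obstacle is this geometric construction: one must verify, using the refined information of Lemma~\ref{lemdis} and the appendix, that the parabolic trace of the leading quadratic form genuinely separates the lattice points $2\pi ik$ from the origin for all $k$ in the window surrounding $M$, and that the higher-order corrections to $\hat{Z}(t)$ do not spoil this separation. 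For each fixed $M$ this is a finite verification, which is precisely what the asymptotic series statement demands.
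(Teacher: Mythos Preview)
Your proposal is correct and follows essentially the same route as the paper: both invoke the identity \eqref{4.69}, use Proposition~\ref{prop4.9} to ensure the front factors $c_k/(1-Z_k)$ are bounded and bounded away from zero (giving the ratio $O(|Z_{M-1}/Z_M|^N)$), and bound the residual integral by the analog of the Subsection~\ref{subsec4.2} argument. You supply more detail on the contour $K_M$ and the parabolic separation than the paper, which simply asserts the integral bound ``as in Subsection~\ref{subsec4.2}''; your caveat that the geometric verification is finite for each fixed $M$ is exactly the level of rigor the paper itself attains.
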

\begin{proof}
This follows from (\ref{4.69}), in which the integral is $o(|Z_M|^{-N})$ and the term with $k=M$ is $O(|Z_M|^{-N})$, while the reciprocal of the term with $k=M-1$ is $O(|Z_{M-1}|^{-N})$ by Proposition~\ref{prop4.9}. In the consideration of the terms with $k=M-1,M $, it is tacitly accumed that $s$ is so large that $|Z_k|$, $k=0,1,...,M$ is a strictly increasing sequence.
\end{proof}


\appendix
\section{Proof of Lemma \ref{lemdis}}
We consider the zeros $z_j$, $j=0,1,...,s-1 $, and $Z_k$, $k\in\dZ$, of the function $z^s-A(z)$ in the unit disk $|z|\leq1$ and in the annulus $1<|z|<r$, respectively, in particular those that are relatively close to 1. These zeros are elements of the set $S_{A,s}=\{z\in\dC | |z|<r,~|z^s|=|A(z)|\}$.
For  $z\in S_{A,s}$, we have that $\ln (z^s X^{-n}(z))$ is purely imaginary. We thus consider the equation
\beq \label{A1}
s \ln  z=n \ln  X(z)+2\pi it
\eq
with $z$ near 1 and $t$ small compared to $s$. Writing
\beq\ \label{A2}
u=2\pi t, ~~~~~~z=1+v,
\eq
we get by Taylor expansion around $z=1$ the equation
\beq \label{A3}
s(v-\tfrac12 v^2+O(v^3))=n \ln (1+X'(1)v+\tfrac12 X''(1)v^2+O(v^3))+iu.
\eq
Dividing by $s$ and using that $\frac{n}{s} X'(1)=1-\gamma/\sqrt{s}$, yields
\beq \label{A4}
v-\tfrac12 v^2+O(v^3)=\Bigl(1-\frac{\gamma}{\sqrt{s}}\Bigr) \Bigl(v+\frac{X''(1)-(X'(1))^2}{2X'(1)} v^2+O(v^3)\Bigr)+i \frac{u}{s},
\eq
i.e.,
\beq \label{A5}
- \frac{\gamma}{\sqrt{s}} v+\frac{\sigma^2}{2\mu} v^2+i \frac{u}{s}= O\Bigl(\frac{v^2}{\sqrt{s}}\Bigr)+O(v^3),
\eq
where we have used that
\beq \label{A6}
\mu=X'(1), ~~~~~~\sigma^2=X''(1)-(X'(1))^2+X'(1)>0.
\eq
Dividing in (\ref{A5}) by $\sigma^2/2\mu$ and completing a square, we get
\beq \label{A7}
\Bigl(v-\frac{\gamma\mu}{\sigma^2\sqrt{s}}\Bigr)^2=\Bigl(\frac{\gamma\mu}{\sigma^2\sqrt{s}}\Bigr)^2 \Bigl(1-\frac{2iu\sigma^2}{\gamma^2\mu}\Bigr)+O\Bigl(\frac{v^2}{\sqrt{s}}\Bigr)+O(v^3).
\eq
Taking square roots at either side of (\ref{A7}) and using that the leading term at the right-hand side of (\ref{A7}) has order $(1+|u|)/s$, we get
\beq \label{A8}
v=\frac{\gamma\mu}{\sigma^2\sqrt{s}}\pm\frac{\gamma\mu}{\sigma^2\sqrt{s}} \Bigl( 1-\frac{2iu\sigma^2}{\gamma^2\mu}\Bigr)^{1/2}+O\Bigl(\frac{|v|^2+|v|^3\sqrt{s}} {\sqrt{1+|u|}}\Bigr).
\eq
Irrespective of the $\pm$-sign, the leading part of right-hand side of (\ref{A8}) has order $((1+|u|)/s)^{1/2}$ (and even $(|u|/s)^{1/2}$ in the case of the $-$-sign), and the $O$-term has order $(1+|u|)/s$ which is $o(((1+|u|)/s)^{1/2})$ as long as $|u|/s=o(1)$. Thus, in that regime of $u$, we have
\begin{eqnarray} \label{A9}
z=1+v & = & 1+\frac{\gamma\mu}{\sigma^2\sqrt{s}} \Bigl(1\pm\Bigl( 1-\frac{2iu\sigma^2}{\gamma^2\mu}\Bigr)^{1/2}\Bigr)+O\Bigl(\frac{1+|u|}{s}\Bigr) \nonumber \\[3mm]
& = & 1\pm\frac{a_0}{\sqrt{s}} ((b_0^2-iu)^{1/2}\pm b_0)+O\Bigl(\frac{1+|u|}{s}\Bigr),
\end{eqnarray}
where we have inserted
\beq \label{A10}
a_0=\Bigl(\frac{2\mu}{\sigma^2}\Bigr)^{1/2}, ~~~~~~b_0=\Bigl(\frac {\gamma^2\mu}{2\sigma^2}\Bigr)^{1/2}=\tfrac12 \gamma a_0.
\eq
In the case of the minus sign in (\ref{A9}), the $O$-term may be replaced by $O(|u|/s)$.
Choosing $u=2\pi j$ with $j=0,1,...,$ and $j=o(s)$, we get from (\ref{A9}) with the minus sign, \eqref{3.2}.
Choosing $u=2\pi k$ with $k\in\dZ$ and $k=o(s)$, we get from (\ref{A9}) with the plus sign, \eqref{3.3}.

\bibliography{Bibliography}

\end{document}